\newtheorem{thm}{Theorem}[section]
\newtheorem{lem}{Lemma}[section]
\theoremstyle{definition}
\newtheorem{definition}{Definition}[section]
\newtheorem*{notation}{Notation}
\newtheorem*{ack}{Acknowledgements}
\theoremstyle{rmq}
\newtheorem{rmq}{Remark}[section]
\numberwithin{equation}{section}
\newcommand{\N}{{\mathbb N}}
\newcommand{\Z}{{\mathbb Z}}
\newcommand{\R}{{\mathbb R}}
\definecolor{blu}{rgb}{0,0,1}
\newcommand{\be}{\beta}
\newcommand{\la}{\lambda}
\newcommand{\De}{\Delta}
\newcommand{\eps}{\varepsilon}
\newcommand{\beq[1]}{\begin{equation}\label{eq:#1}}
\newcommand{\eeq}{\end{equation}}
\begin{document}
\title{Existence and orbital stability of standing waves to nonlinear Schr\"odinger system with partial confinement}

\author{{Tianxiang Gou}}

\date{}
\maketitle

\begin{abstract}
We are concerned with the existence of solutions to the following nonlinear Schr\"odinger system in $\R^3$,
\begin{equation*}
\left\{
\begin{aligned}
-\De u_1  + (x_1^2+x_2^2)u_1&= \la_1 u_1 + \mu_1 |u_1|^{p_1 -2}u_1
            + \be r_1|u_1|^{r_1-2}u_1|u_2|^{r_2}, \\
-\De u_2 + (x_1^2+x_2^2)u_2&= \la_2 u_2 + \mu_2 |u_2|^{p_2 -2}u_2
            +\be  r_2 |u_1|^{r_1}|u_2|^{r_2 -2}u_2,
\end{aligned}
\right.
\end{equation*}
under the constraint
\begin{align*}
\int_{\R^3}|u_1|^2 \, dx = a_1>0,\quad \int_{\R^3}|u_2|^2 \, dx = a_2>0,
\end{align*}
where $\mu_1, \mu_2, \beta >0, 2 <p_1, p_2 < \frac{10}{3}$, $r_1, r_2>1, r_1 + r_2 < \frac{10}{3}$. In the system, the parameters $\lambda_1, \lambda_2 \in \R$ are unknown and appear as the associated Lagrange multipliers. Our solutions are achieved as global minimizers of the underlying energy functional subject to the constraint. Our purpose is to establish the compactness of any minimizing sequence up to translations. As a by-product, we obtain the orbital stability of the set of global minimizers.
\end{abstract}

{\bf Keywords}: Nonlinear Schr\"odinger system; Standing waves; Global minimizers; Orbital stability; Rearrangement


\section{Introduction}\label{sec:intro}

We are interested in the following time-dependent nonlinear Schr\"odinger system in $\R^3$,
\begin{equation}\label{sys}
\begin{cases}
- i \partial_t \varphi_1 + (x_1^2+x_2^2)\varphi_1 = \Delta \varphi_1 + \mu_1 |\varphi_1|^{p_1-2}\varphi_1
                   +\beta r_1 |\varphi_1|^{r_1-2}\varphi_1|\varphi_2|^{r_2},\\
- i \partial_t \varphi_2 + (x_1^2+x_2^2)\varphi_2= \Delta \varphi_2 + \mu_2 |\varphi_2|^{p_2-2}\varphi_2
                   +\beta r_2 |\varphi_1|^{r_1}|\varphi_2|^{r_2-2}\varphi_2.
\end{cases}
\end{equation}
This system governs various physical phenomena, such as the Bose-Einstein condensates with multiple states, or the propagation of mutually incoherent waves packets in nonlinear optics, see for example \cite{AA, EG, Fr, M}. In this system, the functions $\varphi_1 ,\varphi_2$ are corresponding condensate amplitudes, the parameters $\mu_i$ and $\beta$ are intraspecies and interspecies scattering length, describing interaction of the same state and different states, respectively. The positive sign of $\mu_i$ (and $\beta$) represents attractive interaction, and the negative one represents repulsive interaction.

From a physical and mathematical point of view, a fundamental issue to consider \eqref{sys} consists in studying standing waves. Here by standing waves to \eqref{sys} we mean solutions of the form
$$
\varphi_1(t,x) = e^{-i\la_1 t} u_1(x), \quad \varphi_2(t,x) = e^{-i\la_2 t} u_2(x)
$$
for $\lambda_1, \lambda_2 \in \R$. This ansatz then gives rise to the following stationary nonlinear Schr\"odinger system satisfied by $u_1$ and $ u_2$,
\begin{equation} \label{system}
\left\{
\begin{aligned}
-\De u_1  + (x_1^2+x_2^2)u_1&= \la_1 u_1 + \mu_1 |u_1|^{p_1 -2}u_1
            +\be  r_1 |u_1|^{r_1-2}u_1|u_2|^{r_2}, \\
-\De u_2 + (x_1^2+x_2^2)u_2&= \la_2 u_2 + \mu_2 |u_2|^{p_2 -2}u_2
            + \be r_2 |u_1|^{r_1}|u_2|^{r_2 -2}u_2.
\end{aligned}
\right.
\end{equation}

Observe that the $L^2$-norm of the solution to the Cauchy problem of \eqref{sys} is conserved along time, namely for any $t>0$,
$$
\int_{\R^3}|\varphi_i(t, \cdot)|^{2} \, dx =\int_{\R^3}|u_i|^{2} \, dx \ \ \text{for} \ \ i=1,2.
$$
Furthermore, the $L^2$-norm of solution often denotes the number of particles of each component in Bose-Einstein condensates, or the power supply in nonlinear optics. Thus from these physical relevance, it is of great interest to seek for solutions to \eqref{system} having prescribed $L^2$-norm. More precisely, for given $a_1, a_2>0$, to find $(\lambda_1, \lambda_2) \in \R^2$ and $(u_1, u_2) \in H \times H$ satisfying \eqref{system} together with normalized condition,
\begin{align} \label{mass}
\int_{\R^3}|u_1|^2 \, dx=a_1, \quad \int_{\R^3}|u_2|^2 \, dx =a_2.
\end{align}
Here $H$ is the Sobolev space defined by
$$
H:=\{u \in H^1(\R^3): \int_{\R^3}(x_1^2 + x_2^2)|u|^2 \, dx <\infty\},
$$
equipped with the norm
$\|u\|_H^2:=\|u\|_{\dot H}^2+ \|u\|_{2}^2$, where
$$
\|u\|_{\dot H}^2:= \int_{\R^3}|\nabla u|^2 + (x_1^2 + x_2^2)|u|^2\, dx.
$$
Such a solution $(u_1, u_2) \in H \times H$ is obtained as a critical point of the energy functional $J: H \times H \to \R$ given by
\begin{align*}
J(u_1,u_2)
 &:= \frac12\int_{\R^3} |\nabla u_1|^2 + |\nabla u_2|^2+
     \left( x_1^2 + x_2^2 \right) \left(|u_1|^2 + |u_2|^2\right) \, dx\\
 &   -  \frac{\mu_1}{p_1}\int_{\R^3} |u_1|^{p_1} \, dx
     - \frac{\mu_2}{p_2}  \int_{\R^3} |u_2|^{p_2} \, dx
     - \be  \int_{\R^3}|u_1|^{r_1}|u_2|^{r_2} \, dx,
\end{align*}
on the constraint $S(a_1, a_2):=S(a_1) \times S(a_2)$, where
$$
S(a):= \{u \in H: \int_{\R^3}|u|^2 \, dx =a > 0\},
$$
and the parameters $\lambda_1, \lambda_2$ are then determined as Lagrange multipliers.

The main motivation of studying solutions to \eqref{system}-\eqref{mass} stems from \cite{Be}, where the authors concerned the existence and some quantitative properties of normalized solutions to a single equation with the partial confinement $x_1^2 + x_2^2$ in the mass supercritical regime. See also \cite{ACS, Oh1} for the associated physical interest and the study of the Schr\"odinger equation with the partial confinement. However, so far the research of normalized solutions to a system with the partial confinement is still open. Thus the current purpose of the paper is to investigate the existence of solutions to \eqref{system}-\eqref{mass} under the following assumption
\begin{enumerate}
\item [$(H_0)$] $\mu_1, \mu_2, \beta>0, 2 <p_1, p_2 < \frac{10}{3}, r_1, r_2>1, r_1 + r_2 < \frac{10}{3}.$
\end{enumerate}
Note first that under the assumption $(H_0)$, the energy functional $J$ restricted to $S(a_1, a_2)$ is bounded from below. Thus it is natural to introduce the following minimization problem
\begin{align} \label{min}
m(a_1, a_2):=\inf_{(u_1, u_2) \in S(a_1, a_2)}J(u_1, u_2).
\end{align}
Clearly, minimizers to \eqref{min} are critical points of the energy functional $J$ restricted to $S(a_1, a_2)$, then solutions to \eqref{system}-\eqref{mass}. We now state our main result.

\begin{thm} \label{existence}
Assume $(H_0)$. Then any minimizing sequence to \eqref{min} is compact in $H \times H$ up to translations. In particular, there exists a solution to \eqref{system}-\eqref{mass} as a minimizer to \eqref{min}.
\end{thm}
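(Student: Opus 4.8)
The plan is to use the concentration-compactness principle of Lions, adapted to account for the partial confinement in the $x_3$-direction. First I would establish that $m(a_1,a_2)$ is finite and negative: boundedness from below follows from $(H_0)$ together with the Gagliardo--Nirenberg inequality (all the nonlinear exponents $p_1,p_2$ and $r_1+r_2$ being strictly below the $L^2$-critical exponent $10/3$), so that the nonlinear terms are controlled by a small fraction of $\|u_i\|_{\dot H}^2$ plus constants depending on $a_1,a_2$; negativity follows by testing with suitably rescaled (dilated in all three variables) trial functions and exploiting that the attractive nonlinearities dominate at small amplitude/large spatial scale. Along the way I would record that any minimizing sequence $(u_{1,n},u_{2,n})$ is bounded in $H\times H$, hence up to a subsequence converges weakly in $H\times H$, strongly in $L^2_{loc}$, and a.e.

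The core is a subadditivity analysis. I would prove the strict subadditivity inequality
\begin{align*}
m(a_1,a_2) < m(b_1,b_2) + m(a_1-b_1,a_2-b_2)
\end{align*}
for all admissible splittings $0\le b_i\le a_i$ with $(b_1,b_2)\neq(0,0)$ and $(b_1,b_2)\neq(a_1,a_2)$, together with the one-component versions $m(a_1,a_2) < m(a_1,0)+m(0,a_2)$-type statements and the monotonicity/scaling relation $m(\theta a_1,\theta a_2)$ behaving strictly superadditively in $\theta$. The standard route is: (i) show $m(a,0)<0$ for each single mass (this is essentially the scalar result, or can be quoted from \cite{Be}-type arguments restricted to the subcritical range), (ii) use the scaling $u\mapsto \theta^{1/2}u$ at fixed profile to get $m(\theta a_1,\theta a_2) \le \theta\, m(a_1,a_2)$ with strictness because $J$ is not homogeneous, and (iii) combine these to deduce strict subadditivity. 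The coupling term $\beta\int |u_1|^{r_1}|u_2|^{r_2}$, being positive, only helps lower the energy when mass is kept together, which is exactly what drives strictness in the mixed splitting.

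Next I would run the concentration-compactness dichotomy on the densities $\rho_n := |u_{1,n}|^2 + |u_{2,n}|^2$ with $\int \rho_n = a_1+a_2$. Vanishing is excluded because it would force the nonlinear terms to vanish (by the subcritical Gagliardo--Nirenberg / Lions vanishing lemma), giving $\liminf m(a_1,a_2) \ge 0$, contradicting $m(a_1,a_2)<0$. Dichotomy is excluded by the strict subadditivity inequality above, after the usual cut-off argument splitting each $u_{i,n}$ into a piece of mass near $b_i$ and a far-away piece of mass near $a_i-b_i$ and passing to the limit in the energy. Hence compactness holds: there exist translations $y_n \in \R^3$ such that $(u_{1,n}(\cdot - y_n), u_{2,n}(\cdot - y_n))$ is tight. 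One subtlety specific to this problem is the confining potential $x_1^2+x_2^2$: translations in the $x_1,x_2$ directions cost potential energy, so in fact the translation sequence $y_n$ must be bounded in the first two coordinates (otherwise the potential term blows up against the finite energy bound), and genuine translation invariance only occurs in the $x_3$-direction. I would make this precise to conclude that, after translating only in $x_3$, the weak limit $(u_1,u_2)$ lies in $S(a_1,a_2)$, and then strong convergence in $L^{p_i}$ and in the coupling norm plus weak lower semicontinuity of the quadratic form give $J(u_1,u_2) \le \liminf J(u_{1,n},u_{2,n}) = m(a_1,a_2)$, forcing equality, strong $H\times H$ convergence, and that the limit is a minimizer.

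The main obstacle I expect is establishing the strict subadditivity inequalities cleanly in the two-component setting, particularly handling all the boundary cases of the splitting (one component entirely on one side, both split, etc.) and making sure the coupling term's contribution is accounted for with the correct sign in each case; this is where the argument is most delicate and where the hypothesis $(H_0)$ (all exponents mass-subcritical, all interactions attractive) is used essentially. A secondary technical point is the careful bookkeeping in the dichotomy cut-off: localizing functions in $H$ (not just $H^1$) requires controlling the potential-energy cross terms, but since the cut-offs can be taken to grow and the potential is locally bounded, this goes through with standard estimates.
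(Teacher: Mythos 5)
Your overall architecture (bounded minimizing sequences, exclude vanishing, exclude dichotomy via strict subadditivity, translations only in $x_3$) is the right skeleton, but two of your key steps fail for this particular problem, and the paper is structured precisely to avoid them. First, your exclusion of vanishing rests on $m(a_1,a_2)<0$, which you propose to prove by dilating trial functions in all three variables at large spatial scale. This does not work here: the partial confinement term $\int_{\R^3}(x_1^2+x_2^2)(|u_1|^2+|u_2|^2)\,dx$ blows up under spreading in the $(x_1,x_2)$ directions, and in fact $J(u_1,u_2)\geq \frac{\Lambda_0}{2}(a_1+a_2)+\dots$, so negativity of $m(a_1,a_2)$ is not available (the paper states explicitly that it is unknown whether $m(a_1,a_2)<0$). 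The paper instead excludes vanishing of each $\int|u_i^n|^{p_i}$ by proving the strict upper bounds $m(a_1,a_2)<\frac{\Lambda_0 a_1}{2}+m_{\mu_2,p_2}(a_2)$ and $m(a_1,a_2)<m_{\mu_1,p_1}(a_1)+\frac{\Lambda_0 a_2}{2}$ (Lemma \ref{prop}(3)), using product trial functions $w(x')\varphi_\lambda(x_3)$ dilated \emph{only} in $x_3$, where $w$ realizes the ground state $\lambda_0=\Lambda_0$ of the transverse problem; if a component vanished in $L^{p_i}$ the energy would be at least the right-hand side, a contradiction.

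Second, your route to strict subadditivity via the scaling $u\mapsto\theta^{1/2}u$ only controls \emph{proportional} splittings $(b_1,b_2)=(\theta a_1,\theta a_2)$: rescaling multiplies both masses by the same factor, so it cannot yield $m(a_1,a_2)<m(b_1,b_2)+m(a_1-b_1,a_2-b_2)$ when $b_1/a_1\neq b_2/a_2$ (including the boundary cases $(a_1,0)+(0,a_2)$ that you would need to keep the two components from drifting apart). This is the standard obstruction for two-constraint problems, and the paper sidesteps it entirely: it never proves the full strict subadditivity inequality. Instead, in the dichotomy step it first upgrades the weak limits of the split pieces to genuine minimizers of $m(b_1,b_2)$ and $m(c_1,c_2)$, replaces them by their Steiner rearrangements in $x_3$ (which preserve all terms of $J$ including the transverse potential), uses elliptic regularity and the maximum principle to get positive $C^1$ profiles monotone in $|x_3|$, and then applies Shibata's coupled rearrangement $u\ast w$ in the $x_3$ variable, whose \emph{strict} gradient inequality \eqref{strict} gives $m(b_1+c_1,b_2+c_2)<m(b_1,b_2)+m(c_1,c_2)$ for exactly the masses produced by the decomposition, contradicting the additivity forced by the weak subadditivity of Lemma \ref{prop}(2). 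Without either the rearrangement argument or a genuinely different proof of strict subadditivity for non-proportional splittings, your dichotomy exclusion (and hence the whole compactness claim) does not close.
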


We point out that if one replaces the partial confinement $x_1^2+x_2^2$ in the energy functional $J$ by a trapping potential $x_1^2+x_2^2+ x_3^2$, and in this situation we denote by $\tilde{H}$ the corresponding Sobolev space induced by $x_1^2+x_2^2+ x_3^2$, namely
$$
\tilde{H}:=\{u \in H^1(\R^3): \int_{\R^3}(x_1^2 + x_2^2 + x_3^2)|u|^2 \, dx <\infty\},
$$
then the compactness of any minimizing sequence is a simple consequence of the fact that the embedding $\tilde{H} \hookrightarrow L^2(\R^3)$ is compact. However, in our case such a compact embedding is violated, thus it is more difficult to discuss the compactness of any minimizing sequence to \eqref{min}.

We now present the strategy to prove Theorem \ref{existence}. Suppose that $\{(u_1^n, u_2^n)\} \subset S(a_1, a_2)$ is an arbitrary minimizing sequence to \eqref{min}. Under the assumption $(H_0)$, one first finds that $\{(u_1^n, u_2^n)\}$ is bounded in $H \times H$. In order to see the compactness of $\{(u_1^n, u_2^n)\}$ in $H \times H$, we shall follow the Lions concentration compactness principle \cite{Li1, Li2}, thus it suffices to exclude the possibilities of vanishing and dichotomy of any minimizing sequence. Indeed, vanishing can be done by taking advantage of the classical Lions concentration compactness Lemma \cite[Lemma I.1]{Li2} and key ingredients proposed in \cite{Be, BeVi}.

At this point, it remains to rule out the possibility of dichotomy. For this, the heuristic argument is to establish the following strict subadditivity inequality
\begin{align} \label{subadditivity}
m(a_1, a_2) < m(b_1, b_2)+ m(a_1-b_1, a_2-b_2),
\end{align}
where $0\leq b_i \leq a_i$ for $ i=1, 2$, $(b_1, b_2) \neq (a_1, a_2)$ and $(b_1, b_2) \neq (0, 0)$. When dealing with minimization problems only having one $L^2$-norm constraint, in general the associated strict subadditivity inequality can be achieved with the aid of scaling techniques, see for instance \cite{BeSi, CJS, Sh1}. However, when it comes to minimization problems with multiple constraints, scaling techniques are not applicable any more, which causes more involved to establish strict subadditivity inequality under this circumstance. In this direction, for the special case that the dimension $n=1$, in \cite{Bh1, Bh, NW}, the authors established the corresponding strict subadditivity inequality with the help of \cite[Lemma 2.10]{AB}, and we should point out that the original idea comes from \cite{By}, see also \cite{NgWa1, NgWa2, Oh, Ga} for the relevant research. However, coming back to our minimization problem \eqref{min}, it seems hard to check \eqref{subadditivity} by using the ingredients mentioned before. For this reason, we make use of the coupled rearrangement arguments presented in \cite{Sh} to prevent dichotomy from happening, which is independent of proving that \eqref{subadditivity} holds true. The remarkable feature of the coupled rearrangement arguments is that it can strictly reduce the sum of gradient $L^2$-norm of two functions, for more details see \eqref{strict}.

Let us mention \cite{GoJe}, where the authors also studied the compactness of any minimizing sequence through the coupled rearrangement arguments from \cite{Ik}. We would like to highlight some differences of the strategy proposed in the present paper to discuss the compactness of any minimizing sequence with the one in \cite{GoJe}. Firstly, because of the presence of the partial confinement $x_1^2+x_2^2$, here we need to employ the coupled rearrangement of functions with respect to the variable $x_3$, see the definition \eqref{defcoupled}, which makes the proof of the compactness more delicate. Secondly, in \cite{GoJe}, where the infimum of the associated global minimization problem is strictly negative, thus the convergence of minimizing sequence in $L^2(\R^N) \times L^2(\R^N)$ benefits from its convergence in $L^p(\R^N) \times L^p(\R^N)$ for $2<p<6$. However, in our situation, it is unknown if there holds $m(a_1, a_2) <0$, which enables the verification of the convergence of any minimizing sequence in $L^2(\R^3) \times L^2(\R^3)$ more complex.

Defining
$$
G(a_1,a_2) := \{(u_1, u_2) \in S(a_1, a_2): J(u_1, u_2) = m(a_1, a_2)\},
$$
we now show the orbital stability of minimizers to \eqref{min}.

\begin{thm}\label{stable}
Assume $(H_0)$ and the local existence to the Cauchy problem of \eqref{sys} hold. Then the set $G(a_1, a_2)$ is orbitally stable. Namely for any $\eps>0$ there exists $\delta>0$
so that if $(\varphi_{1, 0}, \varphi_{2, 0}) \in H \times H$ satisfies
\begin{equation*}
\inf_{(u_1, u_2)\in G(a_1, a_2)} \|(\varphi_{1, 0}, \varphi_{2, 0}) - (u_1,u_2)\|_{H \times H}\leq \delta,
\end{equation*}
then
\begin{equation*}
\sup_{t  \in (0, T)} \inf_{(u_1, u_2)\in G(a_1, a_2)}
\|(\varphi_1(t), \varphi_2(t)) - (u_1, u_2)\|_{H \times H} \leq \eps,
\end{equation*}
where $(\varphi_1(t), \varphi_2(t))$ is the solution to the Cauchy problem of \eqref{sys}
with the initial datum $(\varphi_{1, 0}, \varphi_{2, 0})$, $T$ denotes the maximum existence time of the solution, and $\|\cdot\|_{H \times H}$ stands for the norm in the Sobolev space $H\times H$.
\end{thm}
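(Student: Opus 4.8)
The plan is to derive orbital stability from the compactness statement of Theorem \ref{existence} by the classical Cazenave--Lions argument. First I would record the two ingredients that the proof rests on: (i) the conservation laws for the Cauchy problem of \eqref{sys}, namely that along any solution $(\varphi_1(t),\varphi_2(t))$ on its maximal existence interval one has $\|\varphi_i(t)\|_2 = \|\varphi_{i,0}\|_2$ for $i=1,2$ and $J(\varphi_1(t),\varphi_2(t)) = J(\varphi_{1,0},\varphi_{2,0})$; and (ii) the fact, contained in Theorem \ref{existence}, that every minimizing sequence for \eqref{min} is, up to translations, relatively compact in $H\times H$, so in particular $G(a_1,a_2)\neq\emptyset$ and $G(a_1,a_2)$ is bounded in $H\times H$. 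Because the potential $x_1^2+x_2^2$ is invariant under translations in the $x_3$-direction only, the compactness and hence the stability will be formulated modulo the distance $\inf_{(u_1,u_2)\in G(a_1,a_2)}\|\cdot-(u_1,u_2)\|_{H\times H}$, which already quotients out these translations since $G(a_1,a_2)$ is itself invariant under $x_3$-translations.

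The argument is by contradiction. Suppose $G(a_1,a_2)$ is not stable: there exist $\eps_0>0$, a sequence of initial data $(\varphi_{1,0}^n,\varphi_{2,0}^n)\in H\times H$ with
\begin{equation*}
\inf_{(u_1,u_2)\in G(a_1,a_2)}\|(\varphi_{1,0}^n,\varphi_{2,0}^n)-(u_1,u_2)\|_{H\times H}\to 0,
\end{equation*}
and times $t_n\in(0,T_n)$ such that
\begin{equation*}
\inf_{(u_1,u_2)\in G(a_1,a_2)}\|(\varphi_1^n(t_n),\varphi_2^n(t_n))-(u_1,u_2)\|_{H\times H}\geq\eps_0 .
\end{equation*}
Set $(w_1^n,w_2^n):=(\varphi_1^n(t_n),\varphi_2^n(t_n))$. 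The next step is to replace $(w_1^n,w_2^n)$ by a mass-corrected sequence $(\tilde w_1^n,\tilde w_2^n):=\big(\sqrt{a_1}\,w_1^n/\|w_1^n\|_2,\ \sqrt{a_2}\,w_2^n/\|w_2^n\|_2\big)\in S(a_1,a_2)$; since the initial data converge in $H\times H$ to the compact, hence $L^2$-bounded-below, set $G(a_1,a_2)$, conservation of mass gives $\|w_i^n\|_2\to\sqrt{a_i}$, so the rescaling constants tend to $1$ and $\|(\tilde w_1^n,\tilde w_2^n)-(w_1^n,w_2^n)\|_{H\times H}\to 0$ (using that $\{(w_i^n)\}$ is bounded in $H$, which follows from energy conservation and the coercivity of $J$ on $S(a_1,a_2)$ noted after \eqref{min}). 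Using continuity of $J$ on $H\times H$ together with $J(w_1^n,w_2^n)=J(\varphi_{1,0}^n,\varphi_{2,0}^n)\to m(a_1,a_2)$ — the latter because the initial data approach $G(a_1,a_2)$ and $J$ is continuous — we conclude $J(\tilde w_1^n,\tilde w_2^n)\to m(a_1,a_2)$. Hence $\{(\tilde w_1^n,\tilde w_2^n)\}$ is a minimizing sequence for \eqref{min}.

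By Theorem \ref{existence}, after translating in $x_3$ and passing to a subsequence, $(\tilde w_1^n,\tilde w_2^n)\to (v_1,v_2)$ in $H\times H$ with $(v_1,v_2)\in G(a_1,a_2)$. Undoing the $x_3$-translation (which leaves $G(a_1,a_2)$ invariant and leaves the quotient distance unchanged) yields $\inf_{(u_1,u_2)\in G(a_1,a_2)}\|(\tilde w_1^n,\tilde w_2^n)-(u_1,u_2)\|_{H\times H}\to 0$, and combined with $\|(\tilde w_1^n,\tilde w_2^n)-(w_1^n,w_2^n)\|_{H\times H}\to 0$ this contradicts the lower bound $\eps_0$ above. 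The main obstacle — and the only place real care is needed — is the bookkeeping of translations: one must make sure that the translations supplied by Theorem \ref{existence} are exactly the symmetries (namely $x_3$-shifts) under which $G(a_1,a_2)$ is invariant, so that the modulated distance to $G(a_1,a_2)$ is genuinely insensitive to them; a minor secondary point is to justify that the maximal existence time issue is harmless, i.e. that the a priori $H\times H$ bound coming from the conserved energy prevents blow-up, so that $t_n$ is an admissible time and the solution exists on $[0,t_n]$ (this is where the hypothesis of local existence in the statement is invoked).
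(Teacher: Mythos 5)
Your argument is the classical Cazenave--Lions contradiction scheme (conservation laws, mass renormalization to return to $S(a_1,a_2)$, compactness of the resulting minimizing sequence via Theorem \ref{existence}, and invariance of $G(a_1,a_2)$ under $x_3$-translations), which is exactly the route the paper indicates: it omits the proof, stating it is a direct adaptation of \cite{CaLi}. Your write-up is correct and supplies precisely the details the paper leaves to the reader, including the two genuinely delicate points (that the translations in Theorem \ref{existence} are $x_3$-shifts under which $G(a_1,a_2)$ is invariant, and that only times $t_n<T_n$ within the maximal existence interval are needed).
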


Based upon Theorem \ref{existence}, the proof of Theorem \ref{stable} is a direct adaption of the classical arguments in \cite{CaLi},thus we shall not provide it.

This paper is organized as follows. In Section \ref{pre}, we recall some preliminary results concerning rearrangements, namely the Steiner the rearrangement and the coupled rearrangement. In Section \ref{scal}, we deal with an associated one constraint minimization problem. Finally, Section \ref{proof} is devoted to the proof of Theorem \ref{existence}.

\begin{ack}
The author wants to express his appreciation to Louis Jeanjean for useful discussion. The author is supported by the China Scholarship Council.
\end{ack}

\begin{notation}
In this paper we write $L^p(\R^3)$ the usual Lebesgue space endowed with the norm
$$
\|u\|_p^p := \int_{\R^3}|u|^p\,dx,
$$
and $H^1(\R^3)$ the usual Sobolev space endowed with the norm
$$
\|u\|^2 := \int_{\R^3}|\nabla u|^2+|u|^2 \, dx.
$$
We denote by $'\rightarrow'$ and $'\rightharpoonup'$ the strong convergence and weak convergence in corresponding space, respectively.
\end{notation}

\section{Preliminary results} \label{pre}

Firstly, notice that under the assumption $(H_0)$, the energy functional $J$ is well-defined on $H \times H$. Indeed, since the embedding ${H} \hookrightarrow L^p(\R^3)$ is continuous for $2 \leq p \leq 6$, and for $r_1, r_2>1, r_1+r_2 <6$ there is a $q>1$ satisfying $2 \leq r_1 q, r_2 q' \leq 6,$ where $ q':= \frac{q}{q-1}$, hence by using the H\"older inequality,
\begin{align} \label{holder}
\int_{\R^N} |u_1|^{r_1}|u_2|^{r_2} \, dx
   \leq \|u_1\|_{r_1q}^{r_1}\|u_2\|_{r_2q'}^{r_2}
   < \infty.
\end{align}
The well-known Gagliardo-Nirenberg inequality for $u \in H$ and $2 \le p \le 6$,
\begin{align} \label{GN}
\|u\|_p \le C(N,p)\|\nabla u\|_2^\alpha \|u\|_2^{1-\alpha},\quad
 \text{where}  \,\, \alpha:=\frac{N(p-2)}{2p},
\end{align}
which infers for $(u_1, u_2) \in S(a_1, a_2),$
\begin{align} \label{bd}
\int_{\R^N} |u_i|^{p_i} \, dx
     \le C(N,p_i,a_i) \|\nabla u_i\|_2^{\frac{N(p_i-2)}{2}} \ \ \text{for} \ \ i=1,2,
\end{align}
and
\begin{align} \label{mix}
 \int_{\R^N} |u_1|^{r_1}|u_2|^{r_2} \, dx
 \le C\|\nabla u_1\|_2^{\frac{N(r_1q-2)}{2q}}
  \|\nabla u_2\|_2^{\frac{N(r_2q'-2)}{2q'}},
\end{align}
where we used \eqref{holder} and $C=C(N,r_1,r_2,a_1,a_2,q)$.

In the following, for any $x \in \R^3$ we write $x:=(x', x_3)$ with $x':=(x_1, x_2) \in \R^2$ and $x_3 \in \R$. We now recall some results regarding rearrangement. We begin with introducing the Steiner rearrangement. Let $u: \R^3 \to \R$ be a Lebesgue measurable function and vanish at infinity, here $u$ is said to vanish at infinity if $\lim_{|x| \to \infty} u(x)=0$. For any $t >0$, $x'\in \R^2$, setting
$$
\{|u(x', y)| > t\}:=\{y \in \R: |u(x', y)| >t\},
$$
we then define $u^*$ the Steiner rearrangement of $u$ by
\begin{align} \label{rearrange}
u^*(x)=u^*(x', x_3):=\int_{0}^{\infty} \chi_{\{|u(x', y)| >t\}^*}(x_3)\, dt,
\end{align}
where $A^* \subset \R$ stands for the Steiner rearrangement of set $A \subset \R$ given by
$$
A^*:=(-\mathcal{L}^1(A)/2, \, \mathcal{L}^1(A)/2),
$$
here $\mathcal{L}^n(A)$ denotes the $n$-dimensional Lebesgue measure of set $A \subset \R^n$. In view of the definition \eqref{rearrange}, for any $x' \in \R^2$ we then see that the function $x_3 \mapsto u^*(x', x_3)$ is nonincreasing with respect to $|x_3|$, and $u^*(x', \cdot)$ is equimeasurable to $u(x', \cdot)$, namely for any $t >0$,
\begin{align} \label{measure1}
\mathcal{L}^1(\{u^*(x', y) >t\})=\mathcal{L}^1(\{|u(x', y)| >t\}).
\end{align}

We now collect some properties enjoyed by the Steiner rearrangement, whose proofs are similar to the ones of the classical Schwarz rearrangement, see \cite{Lieb}.

\begin{lem} \label{steiner}
Assume $u$ be a Lebesgue measurable function and vanish at infinity. Let $u^*$ be the Steiner rearrangement of $u$. Then
\begin{enumerate}
\item [(1)] $u^*$ and $|u|$ is equimeasurable in $\R^3$, i.e. for any $t >0$,
$$
\mathcal{L}^3(\{x \in \R^3: u^*(x)>t\})=\mathcal{L}^3(\{x \in \R^3: |u(x)|>t\});
$$
\item [(2)] $\int_{\R^3} |u^*|^p \, dx= \int_{\R^3} |u|^p \, dx$ \, for \, $1 \leq p < \infty$;
\item [(3)] If $u \in H^1(\R^3)$, then $u^* \in H^1(\R^3)$ and
$$
\int_{\R^3} |\partial_{x_i} u^*|^2 \, dx \leq \int_{\R^3}|\partial_{x_i} u|^2 \, dx \ \ \text{for} \ \ i=1,2,3;
$$
\item [(4)] Let $u, v$ be Lebesgue measurable functions and vanish at infinity, then
$$
\int_{\R^3}|u|^{r_1}|v|^{r_2} \, dx \leq \int_{\R^3}|u^*|^{r_1}|v^*|^{r_2} \, dx.
$$
\end{enumerate}
\end{lem}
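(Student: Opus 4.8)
The plan is to reduce all four assertions to the corresponding one-dimensional statements for the symmetric decreasing rearrangement on $\R$, exploiting that, by the very definition \eqref{rearrange}, for a.e.\ $x'\in\R^2$ the section $x_3\mapsto u^*(x',x_3)$ is exactly the one-dimensional symmetric decreasing rearrangement of $x_3\mapsto|u(x',x_3)|$. Granting this, assertion (1) follows by integrating the slice-wise equimeasurability \eqref{measure1} over $x'$ via Fubini, and assertion (2) is then immediate from (1) together with the layer-cake identity $\int_{\R^3}|w|^p\,dx=p\int_0^\infty t^{p-1}\mathcal{L}^3(\{x:|w(x)|>t\})\,dt$ applied to $w=u^*$ and to $w=u$.

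For assertion (4) I would write both $|u|^{r_1}$ and $|v|^{r_2}$ through their layer-cake representations and use Tonelli to obtain
\[
\int_{\R^3}|u|^{r_1}|v|^{r_2}\,dx=r_1r_2\int_0^\infty\int_0^\infty t^{r_1-1}s^{r_2-1}\,\mathcal{L}^3\big(\{|u|>t\}\cap\{|v|>s\}\big)\,dt\,ds,
\]
and the same identity with $u^*,v^*$ in place of $u,v$; it therefore suffices to prove $\mathcal{L}^3(\{|u|>t\}\cap\{|v|>s\})\le\mathcal{L}^3(\{u^*>t\}\cap\{v^*>s\})$ for all $t,s>0$. By Fubini this reduces to the corresponding inequality on a.e.\ section, where by \eqref{rearrange} and \eqref{measure1} the sets $\{u^*(x',\cdot)>t\}$ and $\{v^*(x',\cdot)>s\}$ are intervals symmetric about $x_3=0$ of lengths $\mathcal{L}^1(\{|u(x',\cdot)|>t\})$ and $\mathcal{L}^1(\{|v(x',\cdot)|>s\})$; hence their intersection has measure the minimum of these two lengths, which plainly dominates $\mathcal{L}^1(\{|u(x',\cdot)|>t\}\cap\{|v(x',\cdot)|>s\})$. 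Integrating in $x'$ concludes.

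Assertion (3) is the delicate one, and I expect it to be the main obstacle. Since $u^*=(|u|)^*$ and $\big|\partial_{x_i}|u|\big|\le|\partial_{x_i}u|$ a.e.\ for each $i$, we may assume $u\ge0$. For the transverse directions $i=1,2$ I would argue by difference quotients: expanding the square and using (2),
\[
\int_{\R^3}|u^*(x+te_i)-u^*(x)|^2\,dx=2\|u\|_2^2-2\int_{\R^3}u^*(x+te_i)u^*(x)\,dx,
\]
and the one-dimensional Hardy--Littlewood inequality $\int_\R fg\le\int_\R f^*g^*$, applied on each line parallel to the $x_3$-axis and then integrated in the remaining two variables, gives $\int_{\R^3}u(x+te_i)u(x)\,dx\le\int_{\R^3}u^*(x+te_i)u^*(x)\,dx$; combined with the standard bound $\int_{\R^3}|u(x+te_i)-u(x)|^2\,dx\le t^2\|\partial_{x_i}u\|_2^2$, this yields $\int_{\R^3}|u^*(x+te_i)-u^*(x)|^2\,dx\le t^2\|\partial_{x_i}u\|_2^2$ for every $t$, hence, since $u^*\in L^2(\R^3)$ by (2), $\partial_{x_i}u^*\in L^2(\R^3)$ with $\|\partial_{x_i}u^*\|_2\le\|\partial_{x_i}u\|_2$. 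For $i=3$, note that $u\in H^1(\R^3)$ forces $u(x',\cdot)\in H^1(\R)$ for a.e.\ $x'$; applying the one-dimensional P\'olya--Szeg\H{o} inequality $\|(h^*)'\|_{L^2(\R)}\le\|h'\|_{L^2(\R)}$ on a.e.\ section and integrating in $x'$ gives $\partial_{x_3}u^*\in L^2(\R^3)$ with $\|\partial_{x_3}u^*\|_2\le\|\partial_{x_3}u\|_2$, so altogether $u^*\in H^1(\R^3)$. The genuine work in (3) lies in justifying the slice-wise manipulations and the difference-quotient criterion for membership in $H^1$ (via the absolute continuity on lines characterization of Sobolev functions and Fubini on null sets of lines), together with the one-dimensional P\'olya--Szeg\H{o} inequality, which itself follows from the co-area formula and Cauchy--Schwarz on level sets. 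All of these are classical and run parallel to the Schwarz-rearrangement case in \cite{Lieb}, which is why only a sketch is warranted.
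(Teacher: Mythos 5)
Your proposal is correct and matches the paper's intent: the paper gives no actual proof of Lemma \ref{steiner}, stating only that the arguments are ``similar to the ones of the classical Schwarz rearrangement'' and citing \cite{Lieb}, and your slice-wise reduction to the one-dimensional symmetric decreasing rearrangement (Fubini and layer-cake for (1), (2) and (4); Hardy--Littlewood plus the translation/difference-quotient characterization of $W^{1,2}$ in the transverse directions and the one-dimensional P\'olya--Szeg\H{o} inequality in $x_3$ for (3)) is exactly the standard way to carry this out. The technical points you flag --- joint measurability of $u^*$, slice regularity of $H^1$ functions via absolute continuity on lines, and the $L^2$-translation criterion for weak differentiability --- are the genuine content, but they are classical and handled as in \cite{Lieb}.
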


We next introduce the coupled rearrangement due to Shibata \cite{Sh}, which can be viewed as an extension of the Steiner rearrangement. Let $u, v: \R^3 \to \R$ be Lebesgue measurable functions and vanish at infinity. For any $t >0, x' \in \R^2$, we define $u \ast v$ the coupled rearrangement of $u, v$ by
\begin{align} \label{defcoupled}
(u \ast v)(x)=(u \ast v)(x', x_3): =\int_{0}^{\infty} \chi_{\{|u(x', y)| >t\} \ast \{|v(x', y)| > t\}}(x_3) \, dt,
\end{align}
where $A \ast B \subset \R$ is the coupled rearrangement of sets $A, B \subset \R$ given by
$$
A \ast B:= (-(\mathcal{L}^1(A)+ \mathcal{L}^1(B))/2, \, (\mathcal{L}^1(A)+ \mathcal{L}^1(B))/2).
$$
Noting the definition \eqref{defcoupled}, for any $x' \in \R^2$ we immediately find that the function $x_3 \mapsto (u \ast v) (x', x_3)$ is nonincreasing with respect to $|x_3|$, and for any $t>0$,
\begin{align} \label{measure2}
\mathcal{L}^1(\{|(u \ast v)(x', y)| >t\})=\mathcal{L}^1(\{|u(x', y)| >t\})+ \mathcal{L}^1(\{|v(x', y)| >t\}).
\end{align}

We now summarize some facts concerning the coupled rearrangement.

\begin{lem} \label{coupled}
Assume $u, v$ are Lebesgue measurable functions and vanish at infinity. Let $u \ast v$ be the coupled rearrangement of $u , v$. Then
\begin{enumerate}
\item [(1)]
\begin{align*}
&\mathcal{L}^3(\{x \in \R^3: (u \ast v)(x) >t\})\\
&=\mathcal{L}^3(\{x \in \R^3: |u(x)| >t\}) + \mathcal{L}^3(\{x \in \R^3: |v(x)| >t\});
\end{align*}
\item [(2)] $\int_{\R^3} |u \ast v|^p \, dx = \int_{\R^3} |u| ^p + |v|^p \, dx$ \, for \, $1 \leq p < \infty$;
\item [(3)] If $u, v \in H^1(\R^3)$, then $u \ast v \in H^1(\R^3)$ and
$$
\int_{\R^3} |\partial_{x_i} (u \ast v)|^2 \, dx \leq \int_{\R^3}|\partial_{x_i} u|^2 + |\partial_{x_i} v|^2 \, dx
\ \ \text{for} \ \ i=1,2,3.
$$
In addition, if $u, v \in H^1(\R^3)\cap C^1(\R^3)$ are positive, and $x_3 \mapsto u(x', x_3), x_3 \mapsto v(x', x_3)$ are nonincreasing with respect to $|x_3|$, then
\begin{align}\label{strict}
\int_{\R^3}|\nabla (u \ast v)|^2 \, dx < \int_{\R^3} |\nabla u|^2 + |\nabla v|^2 \, dx;
\end{align}
\item [(4)] Let $u_1, u_2, v_1, v_2$ be Lebesgue measurable functions and vanish at infinity, then
$$
\int_{\R^3}|u_1|^{r_1}|u_2|^{r_2} + |v_1|^{r_1}|v_2|^{r_2} \, dx
\leq \int_{\R^3} \left(|u_1|^{r_1} \ast |v_1|^{r_2}\right)\left(|u_2|^{r_1} \ast |v_2|^{r_2}\right) \, dx.
$$
\end{enumerate}
\end{lem}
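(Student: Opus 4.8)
The plan is to split the five assertions into the measure-theoretic ones, which are soft, and assertion (3), which carries the analytic content. For (1), (2), (4) I would reduce everything to a one-dimensional statement about the sections $u(x',\cdot),v(x',\cdot)$ via the layer-cake (Cavalieri) representation and Fubini. Concretely: for (1) write $\mathcal{L}^3(\{u\ast v>t\})=\int_{\R^2}\mathcal{L}^1(\{(u\ast v)(x',\cdot)>t\})\,dx'$ and apply \eqref{measure2} pointwise in $x'$; for (2) use $\int_{\R^3}|w|^p\,dx=p\int_0^\infty t^{p-1}\mathcal{L}^3(\{w>t\})\,dt$ for $w\in\{u\ast v,|u|,|v|\}$ together with (1); and for (4), after expanding $\int_{\R}f_1f_2\,dx_3=\int_0^\infty\!\int_0^\infty\mathcal{L}^1(\{f_1>s\}\cap\{f_2>t\})\,ds\,dt$ for nonnegative sections (and likewise for $g_1,g_2$ and for the coupled functions), note that $\{f_i>s\}\ast\{g_i>s\}$ is the centered interval of length $\mathcal{L}^1\{f_i>s\}+\mathcal{L}^1\{g_i>s\}$, so the intersection of two such intervals has length the smaller of the two, and then invoke the elementary inequality $\min(a,b)+\min(c,d)\le\min(a+c,b+d)$ for $a,b,c,d\ge0$; integrating in $x'$ finishes. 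Here the sections $f_i,g_i$ are the relevant powers of $|u_i|,|v_i|$, exactly as in \eqref{holder}.

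For (3) my first step would be the reduction to Steiner-rearranged data. Since $A\ast B$ depends on $A,B$ only through $\mathcal{L}^1(A),\mathcal{L}^1(B)$ and, by \eqref{measure1}, the Steiner rearrangement preserves $\mathcal{L}^1(\{|u(x',\cdot)|>t\})$ for each $x',t$, we have $u\ast v=u^*\ast v^*$; combined with Lemma \ref{steiner}(3), which bounds each $\int_{\R^3}|\partial_{x_i}u^*|^2$ by $\int_{\R^3}|\partial_{x_i}u|^2$, it suffices to prove the three component inequalities assuming each section $u(x',\cdot),v(x',\cdot)$ is even and nonincreasing in $|x_3|$. For $i=3$ I would work fiberwise: set $\mu_u(x',t):=\mathcal{L}^1(\{|u(x',\cdot)|>t\})$ and likewise $\mu_v,\mu_w$ for $v$ and $w:=u\ast v$, so $\mu_w=\mu_u+\mu_v$ by \eqref{measure2}; the one-dimensional co-area formula --- change of variables $t=u(x',s)$, with the set of critical values of $u(x',\cdot)$ having measure zero by the one-dimensional Sard theorem, and flat parts contributing only isolated values of $t$ --- writes $\int_{\R}|\partial_{x_3}u(x',\cdot)|^2\,dx_3$ as a fixed multiple of $\int_0^{M_u(x')}|\partial_t\mu_u(x',t)|^{-1}\,dt$, with $M_u(x')$ the essential supremum of $|u(x',\cdot)|$, and analogously for $v,w$ with $M_w=\max(M_u,M_v)$. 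Since $|\partial_t\mu_w|=|\partial_t\mu_u|+|\partial_t\mu_v|$ a.e. and $\tfrac{1}{p+q}\le\tfrac1p\le\tfrac1p+\tfrac1q$ for $p,q>0$, I obtain the fiberwise bound $\int_{\R}|\partial_{x_3}w(x',\cdot)|^2\le\int_{\R}|\partial_{x_3}u(x',\cdot)|^2+|\partial_{x_3}v(x',\cdot)|^2$, and Fubini in $x'$ gives the case $i=3$. For $i=1,2$ I would run the classical P\'olya--Szeg\H{o} argument for Steiner symmetrization (co-area in $\R^3$, Cauchy--Schwarz on the level sets, Fubini, a one-dimensional rearrangement step), the only modification being that the $x'$-section of $\{w>t\}$ is the centered interval of length $\mathcal{L}^1\{|u(x',\cdot)|>t\}+\mathcal{L}^1\{|v(x',\cdot)|>t\}$; this step is exactly what can be imported from \cite{Sh} (cf. \cite{Lieb}). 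Summing over $i$ gives the first part of (3), and with (2) also that $u\ast v\in H^1(\R^3)$.

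For the strict inequality \eqref{strict} I would exploit the extra hypotheses ($u,v\in H^1(\R^3)\cap C^1(\R^3)$, positive, sections even and nonincreasing in $|x_3|$, so no Steiner reduction is needed) to sharpen the $i=3$ estimate. For a.e. $x'$ one has $M_u(x')=u(x',0)>0$ and $M_v(x')=v(x',0)>0$, so the interval $(0,\min(M_u(x'),M_v(x')))$ has positive length, and on it both $\partial_t\mu_u(x',\cdot)$ and $\partial_t\mu_v(x',\cdot)$ are finite and nonzero a.e.; hence $\tfrac{1}{|\partial_t\mu_u|+|\partial_t\mu_v|}<\tfrac{1}{|\partial_t\mu_u|}+\tfrac{1}{|\partial_t\mu_v|}$ strictly on a set of positive measure, which makes the fiberwise $\partial_{x_3}$-estimate strict for a.e. $x'$. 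Integrating in $x'$ then yields $\int_{\R^3}|\partial_{x_3}(u\ast v)|^2\,dx<\int_{\R^3}|\partial_{x_3}u|^2+|\partial_{x_3}v|^2\,dx$, and adding the (nonstrict) estimates for $\partial_{x_1},\partial_{x_2}$ gives \eqref{strict}.

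I expect the main obstacle to be the transverse part of (3): the P\'olya--Szeg\H{o} inequality for $\partial_{x_1},\partial_{x_2}$ under the coupled operation does not collapse to a one-variable computation the way the $x_3$-direction does, and making it rigorous requires the full co-area/isoperimetric-type estimate together with the verification that replacing the two fiberwise superlevel sets by a single centered interval of the summed length does not increase the relevant anisotropic perimeter --- it is here that I would rely on \cite{Sh}. The remaining friction is routine bookkeeping: validating the one-dimensional co-area change of variables for sections that are only $C^1\cap H^1$, and the measurability in $x'$ needed to apply Fubini throughout.
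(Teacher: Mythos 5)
Your proposal is correct in substance, but it is worth noting that the paper does not actually prove this lemma: assertions (1)--(3) are quoted verbatim from Lemmas 2.2--2.4 of \cite{Sh}, and (4) is dispatched with the remark that one can ``closely follow'' the proof of Lemma A.2 in \cite{Sh}. What you have done is reconstruct the content of those citations. Your reconstruction matches Shibata's arguments: (1) and (2) by Fubini plus the layer-cake formula applied fiberwise through \eqref{measure2}; (4) by the two-parameter layer-cake expansion of $\int f_1f_2$ together with $\mathcal{L}^1(A\cap B)\le\min(\mathcal{L}^1 A,\mathcal{L}^1 B)$, equality for nested centered intervals, and $\min(a,b)+\min(c,d)\le\min(a+c,b+d)$ --- this is exactly the mechanism of \cite[Lemma A.2]{Sh} (note in passing that the exponents in the displayed statement of (4) are garbled; your formulation in terms of general nonnegative sections $f_i,g_i$ proves the version that is actually used later, namely with $|u_i\ast v_i|$ raised to the powers $r_1,r_2$, via the identity $|u|^{r}\ast|v|^{r}=(|u|\ast|v|)^{r}$). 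For (3), your reduction $u\ast v=u^*\ast v^*$ combined with Lemma \ref{steiner}(3) is a clean and legitimate way to pass to symmetrized data, and your fiberwise co-area computation in the $x_3$-direction, including the strictness of $\tfrac{1}{p+q}<\tfrac1p+\tfrac1q$ on the overlap interval $(0,\min(M_u,M_v))$ of positive length (guaranteed by positivity of $u,v$), is precisely the one-dimensional mechanism behind \eqref{strict} in \cite{Sh} and in \cite[Lemma 2.10]{AB}. You correctly identify the transverse inequality for $\partial_{x_1},\partial_{x_2}$ as the step that does not reduce to a one-variable computation and must be taken from \cite{Sh}; since the paper imports the entire lemma from there, this is no weaker than what the paper itself does. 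The remaining gaps you flag (validity of the co-area change of variables for $C^1\cap H^1$ sections, measurability in $x'$) are genuine but standard, and your treatment of critical values via the one-dimensional Sard theorem is the right way to handle them. In short: same underlying proofs, but you supply them where the paper only points to the reference.
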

\begin{proof}
The assertions $(1)$-$(3)$ directly come from Lemma 2.2- Lemma 2.4 in \cite{Sh}. To establish assertion $(4)$, one can closely follow the approach as presented in the proof of \cite[Lemma A.2]{Sh} with some minor changes.
\end{proof}

An important nature of the coupled rearrangement is the strict inequality \eqref{strict}, which is somehow an extension of \cite[Lemma 2.10]{AB} to high dimension. Indeed, this property plays a crucially role to prevent dichotomy from occurring in the proof of Theorem \ref{existence}.

To end this section we show a Brezis-Lieb type result in $H \times H$, whose proof can be done by adapting the approach to prove \cite[Lemma 3.2]{GoJe}, see also \cite[Lemma 2.3]{ChZo}.

\begin{lem} \label{Brezis}
Assume $r_1, r_2 > 1, r_1 + r_2 < 6.$ If $(u_1^n, u_2^n)\rightharpoonup (u_1, u_2)$ in $H \times H$ as $n \to \infty$,
then
$$
\int_{\R^3}|u_1^n|^{r_1}|u_2^n|^{r_2} \, dx
= \int_{\R^3}|u_1|^{r_1}|u_2|^{r_2} + |u_1^n-u_1|^{r_1}|u_2^n-u_2|^{r_2}\,dx +o_n(1),
$$
where $o_n(1) \to 0$ as $n \to \infty$.
\end{lem}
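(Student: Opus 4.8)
The plan is to prove the Brezis--Lieb type identity for the coupled term
$\int_{\R^3}|u_1^n|^{r_1}|u_2^n|^{r_2}\,dx$ by a pointwise-plus-uniform-integrability argument, in the spirit of the classical Brezis--Lieb lemma. Write $v_i^n := u_i^n - u_i$, so that $v_i^n \weakto 0$ in $H$ and, after passing to a subsequence, $u_i^n \to u_i$ a.e.\ in $\R^3$ (using the Rellich--Kondrachov embedding $H^1_{\mathrm{loc}} \hookrightarrow\hookrightarrow L^2_{\mathrm{loc}}$ on a countable exhaustion of $\R^3$ by balls). Set
\[
W_n := |u_1^n|^{r_1}|u_2^n|^{r_2} - |v_1^n|^{r_1}|v_2^n|^{r_2} - |u_1|^{r_1}|u_2|^{r_2}.
\]
The goal is $\int_{\R^3} W_n\,dx = o_n(1)$. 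Since $u_i^n \to u_i$ and $v_i^n \to 0$ a.e., we have $W_n \to 0$ a.e. The main work is to dominate $W_n$ by an $L^1$-convergent-enough family; the standard device is, for fixed $\eps>0$, the elementary inequality
\[
\bigl| |a+b|^{r_1} - |b|^{r_1} \bigr| \le \eps |b|^{r_1} + C_\eps |a|^{r_1},
\]
and its analogue for the $r_2$-power, which after multiplying and expanding the product $|u_1^n|^{r_1}|u_2^n|^{r_2} = |v_1^n + u_1|^{r_1}|v_2^n + u_2|^{r_2}$ yields
\[
|W_n| \le \eps\, f_n + g_n^{\eps},
\]
where $f_n := C\bigl(|v_1^n|^{r_1}|v_2^n|^{r_2} + |v_1^n|^{r_1}|u_2|^{r_2} + |u_1|^{r_1}|v_2^n|^{r_2}\bigr)$ is bounded in $L^1(\R^3)$ uniformly in $n$ (by the H\"older estimate \eqref{holder} together with the boundedness of $\{(u_1^n,u_2^n)\}$, hence of $\{(v_1^n,v_2^n)\}$, in $H\times H$ and the continuous embeddings $H\hookrightarrow L^p$, $2\le p\le 6$), and $g_n^\eps \to 0$ a.e.\ with $0\le g_n^\eps \le h^\eps$ for some fixed $h^\eps \in L^1(\R^3)$ built from $|u_1|$, $|u_2|$ and $C_\eps$-weighted cross terms involving $|v_i^n|$ bounded powers --- more precisely, one arranges $g_n^\eps$ so that its only $n$-dependence sits in factors of $v_i^n$ that are multiplied by fixed $L^1$-integrable envelopes depending on $u_1,u_2$. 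By the generalized dominated convergence theorem (or Fatou applied to $h^\eps - g_n^\eps$), $\int g_n^\eps \,dx \to 0$.

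Carrying this out gives $\limsup_n \int_{\R^3}|W_n|\,dx \le \eps \sup_n \|f_n\|_1$, and letting $\eps\to 0$ yields $\int W_n \to 0$, which is exactly the claim. The one point that needs a little care is the bookkeeping of the expansion: after writing $|u_1^n|^{r_1} = |v_1^n|^{r_1} + \bigl(|u_1^n|^{r_1}-|v_1^n|^{r_1}\bigr)$ and similarly for the second factor, the product splits into four pieces, and one must check that the ``remainder'' pieces --- those containing at least one factor of the form $|u_i^n|^{r_i}-|v_i^n|^{r_i}$, which tends to $|u_i|^{r_i}$ a.e.\ and is controlled by $\eps|v_i^n|^{r_i} + C_\eps|u_i|^{r_i}$ --- are, term by term, either absorbable into the $\eps f_n$ bucket or dominated by a fixed $L^1$ function with an a.e.\ limit handled by dominated convergence. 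The requisite uniform $L^1$ bounds on mixed products such as $|v_1^n|^{r_1}|u_2|^{r_2}$, $|v_1^n|^{r_1}|v_2^n|^{r_2}$, etc., all follow from \eqref{holder}: choose $q>1$ with $2\le r_1 q, r_2 q' \le 6$, apply H\"older, and use that each of $\|v_i^n\|_{r_i q}$, $\|v_i^n\|_{r_i q'}$, $\|u_i\|_{r_i q}$, $\|u_i\|_{r_i q'}$ is bounded by the $H$-norms (themselves uniformly bounded).

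The main obstacle is therefore not conceptual but organizational: setting up the decomposition $|W_n| \le \eps f_n + g_n^\eps$ so that $f_n$ is genuinely uniformly $L^1$-bounded and $g_n^\eps$ genuinely has an integrable $n$-independent majorant with $g_n^\eps \to 0$ a.e. Since the exponents $r_1,r_2$ may lie in $(1,2)$, one should use the sub-additivity-flavoured inequality $||a+b|^r - |b|^r| \le \eps|b|^r + C_\eps|a|^r$ valid for all $r>1$ (proved by homogeneity and compactness on $\{|a|=1\}$, splitting $|b|$ large vs.\ small), rather than anything requiring differentiability of $t\mapsto |t|^r$. As the excerpt indicates, the argument is a direct adaptation of \cite[Lemma 3.2]{GoJe} (see also \cite[Lemma 2.3]{ChZo}), the only new feature being that the ambient space is the partial-confinement space $H$ rather than $H^1(\R^3)$ --- but this changes nothing, since all we use about $H$ is $H \hookrightarrow L^p(\R^3)$ continuously for $2\le p\le 6$ and the local compactness needed to extract an a.e.\ convergent subsequence.
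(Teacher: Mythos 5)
Your skeleton (pointwise convergence of $W_n$ plus a splitting $|W_n|\le \eps f_n+g_n^\eps$) is the right one, but the domination step as you describe it does not close, and the failure is conceptual rather than merely organizational. Writing $A_n:=|u_1^n|^{r_1}-|v_1^n|^{r_1}$ and $B_n:=|u_2^n|^{r_2}-|v_2^n|^{r_2}$, the expansion gives $W_n=A_n|v_2^n|^{r_2}+|v_1^n|^{r_1}B_n+\bigl(A_nB_n-|u_1|^{r_1}|u_2|^{r_2}\bigr)$, and the inequality $|A_n|\le\eps|v_1^n|^{r_1}+C_\eps|u_1|^{r_1}$ (and its analogue for $B_n$) inevitably produces the terms $C_\eps\,|u_1|^{r_1}|v_2^n|^{r_2}$ and $C_\eps\,|v_1^n|^{r_1}|u_2|^{r_2}$, plus $\eps C_\eps$-weighted copies coming from $A_nB_n$. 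These carry the large constant $C_\eps$, so they cannot be absorbed into the $\eps f_n$ bucket (in your displayed $f_n$ you list $|u_1|^{r_1}|v_2^n|^{r_2}$ and $|v_1^n|^{r_1}|u_2|^{r_2}$ with a harmless constant $C$, which is not what the inequality yields). They must therefore live in $g_n^\eps$, and there is no fixed $L^1$ majorant for, say, $|u_1|^{r_1}|v_2^n|^{r_2}$: the factor $|v_2^n|^{r_2}$ is only bounded in $L^{q'}(\R^3)$ and a.e.\ convergent to $0$, and nothing controls $\sup_n|v_2^n|^{r_2}$ pointwise. Hence neither the positive-part trick of the classical Brezis--Lieb proof nor the generalized dominated convergence theorem applies in the form you invoke; this is exactly where the coupled term differs from the scalar case.

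The missing facts are true but need their own argument: since $|v_2^n|^{r_2}$ is bounded in $L^{q'}$ with $1<q'<\infty$ and tends to $0$ a.e., it converges weakly to $0$ in $L^{q'}$, and $|u_1|^{r_1}\in L^{q}$, so $\int_{\R^3}|u_1|^{r_1}|v_2^n|^{r_2}\,dx\to0$ (alternatively, split $\R^3$ into a large ball, where Rellich gives strong local convergence of $v_2^n$, and its complement, where $\|u_1\|_{L^{r_1q}(\{|x|>R\})}$ is small). The paper itself gives no proof but defers to \cite{GoJe} and \cite{ChZo}, and those proofs are organized around precisely this point: one first upgrades the scalar Brezis--Lieb lemma to the $L^q$ statement that $A_n\to|u_1|^{r_1}$ strongly in $L^q$ and $B_n\to|u_2|^{r_2}$ strongly in $L^{q'}$, and then handles each of the three terms $A_n|v_2^n|^{r_2}$, $|v_1^n|^{r_1}B_n$, $A_nB_n$ by H\"older duality between $L^q$ and $L^{q'}$, rather than by a single global domination. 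Once you add this ingredient (together with the routine remark that a.e.\ convergence is extracted only along subsequences, while the limit of the integrals is uniquely identified, so the full sequence converges), your argument is complete.
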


\section{One constraint minimization problem} \label{scal}

In this section we deal with an one constraint minimization problem. Let $\mu >0$, $2 <p < \frac{10}{3}$, and define the energy functional $I_{\mu, p}: H \rightarrow \R$ by
$$
I_{\mu, p}(u):= \frac 12 \int_{\R^3}|\nabla u|^2  + (x_1^2 + x_2^2)|u|^2 \, dx -\frac{\mu}{p} \int_{\R^3}|u|^p \, dx.
$$
Clearly, by using \eqref{bd}, one can find that the energy functional $I_{\mu, p}$ restricted to $S(a)$ is bounded from below. Thus we introduce the following minimization problem
\begin{align} \label{smin}
m_{\mu, p}(a):= \inf_{u \in S(a)} I_{\mu, p}(u).
\end{align}
One can easily check that the function $a \mapsto m_{\mu, p}(a)$ is continuous for any $a \geq 0$. Our main result is the following.

\begin{thm} \label{scalar}
Assume $ \mu>0, 2 < p < \frac{10}{3}$. Then any minimizing sequence to \eqref{smin} is compact, up to translation, in $H$.
\end{thm}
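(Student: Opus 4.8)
The plan is to run the Lions concentration–compactness principle on an arbitrary minimizing sequence $\{u^n\}\subset S(a)$ for $m_{\mu,p}(a)$, using the partial confinement to handle the $x'=(x_1,x_2)$ directions and rearrangement/scaling to handle the $x_3$-direction. First I would check boundedness: since $2<p<\frac{10}{3}$ we have $\frac{N(p-2)}{2}=\frac{3(p-2)}{2}<2$, so by \eqref{bd} the term $\frac{\mu}{p}\|u^n\|_p^p$ is controlled by a sublinear power of $\|\nabla u^n\|_2^2$, hence $I_{\mu,p}(u^n)$ is coercive on $S(a)$ and $\{u^n\}$ is bounded in $H$. Up to a subsequence $u^n\rightharpoonup u$ in $H$, and by the continuity of $a\mapsto m_{\mu,p}(a)$ together with the Brezis–Lieb lemma (applied to the single power $|u|^p$, or simply the standard scalar Brezis–Lieb splitting) the splitting $\|u^n\|_2^2=\|u\|_2^2+\|u^n-u\|_2^2+o_n(1)$ and $I_{\mu,p}(u^n)=I_{\mu,p}(u)+I_{\mu,p}(u^n-u)+o_n(1)$ holds, so it suffices to prove $u\neq 0$ and that no mass escapes, i.e. $\|u\|_2^2=a$.

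The key point is to exclude vanishing. If $\sup_{y\in\R^3}\int_{B_1(y)}|u^n|^2\,dx\to 0$, then by the Lions vanishing lemma $\|u^n\|_p\to 0$ for $2<p<6$, whence $\liminf I_{\mu,p}(u^n)=\frac12\liminf\|u^n\|_{\dot H}^2\geq 0$; to get a contradiction I need a strictly negative test value $m_{\mu,p}(a)<0$. This is obtained by a scaling argument in the $x_3$-variable only (the partial confinement is invariant under such dilations up to lower order, or one simply stretches in $x_3$ and contracts in $x'$ while respecting that the potential only involves $x'$): taking $u_t(x',x_3):=t^{1/2}u(x',tx_3)$ keeps $\|u_t\|_2^2=a$, scales $\int|\nabla' u|^2$ and $\int(x_1^2+x_2^2)|u|^2$ by $1$ in the $x'$-part while the $x_3$-derivative term scales like $t^2$ and $\|u_t\|_p^p$ scales like $t^{(p-2)/2}$; since $(p-2)/2<2$ — indeed $p<\frac{10}{3}<6$ — sending $t\to 0$ makes $I_{\mu,p}(u_t)<0$ for suitable $u$. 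Hence $m_{\mu,p}(a)<0$, vanishing is impossible, and $u\neq 0$.

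It remains to rule out dichotomy, i.e. to upgrade $\|u\|_2^2=:b\in(0,a]$ to $b=a$. Here I would use the strict subadditivity $m_{\mu,p}(a)<m_{\mu,p}(b)+m_{\mu,p}(a-b)$ for $0<b<a$, which for a single constraint follows from a scaling argument: for $\theta>1$ one shows $m_{\mu,p}(\theta a)<\theta\, m_{\mu,p}(a)$ by noting that if $u\in S(a)$ nearly attains $m_{\mu,p}(a)$ then $v(x):=u(x)$ rescaled in mass, say $v:=\sqrt{\theta}\,u$, lies in $S(\theta a)$ with $I_{\mu,p}(v)=\theta\cdot\frac12\|u\|_{\dot H}^2-\theta^{p/2}\frac{\mu}{p}\|u\|_p^p<\theta\, I_{\mu,p}(u)$ because $\theta^{p/2}>\theta$ and, using $m_{\mu,p}(a)<0$, the full expression drops below $\theta\, m_{\mu,p}(a)$ after optimizing; strict superadditivity of this form yields the strict subadditivity inequality in the standard way. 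Combined with the decomposition $m_{\mu,p}(a)=I_{\mu,p}(u)+I_{\mu,p}(u^n-u)+o_n(1)\geq m_{\mu,p}(b)+m_{\mu,p}(a-b)$ (using that $u\in S(b)$ and $u^n-u\in S(a-b)+o_n(1)$ with continuity of $m_{\mu,p}$), strict subadditivity forces $a-b=0$, so $\|u^n\|_2\to\|u\|_2$ and $u^n\to u$ strongly in $L^2$; interpolation with the $H$-boundedness gives $u^n\to u$ in $L^p$, hence $I_{\mu,p}(u)=m_{\mu,p}(a)$ and $\|u^n\|_{\dot H}\to\|u\|_{\dot H}$, yielding strong convergence in $H$ after translating to capture $u$ if the concentration occurs away from the origin. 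I expect the main obstacle to be the bookkeeping in the $x_3$-scaling — making sure the partial confinement term behaves correctly under dilations that act only on $x_3$ — and the translation step, since the potential $x_1^2+x_2^2$ is translation invariant only in $x_3$, so the recovered limit must be translated purely along $x_3$; this is exactly where the partial (rather than full) confinement is used, and it is consistent with the theorem's "up to translation" statement.
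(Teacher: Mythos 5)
Your overall architecture (boundedness, Brezis--Lieb splitting, exclusion of vanishing, strict subadditivity via the mass scaling $u\mapsto\theta^{1/2}u$) matches the paper's, but the key quantitative input you feed into it is wrong, and this is a genuine gap. You exclude vanishing and justify strict subadditivity from the claim $m_{\mu,p}(a)<0$, which you propose to obtain by sending $t\to0$ in $u_t(x',x_3):=t^{1/2}u(x',tx_3)$. Under this scaling
\begin{equation*}
I_{\mu,p}(u_t)=\frac12\int_{\R^3}|\nabla_{x'}u|^2+(x_1^2+x_2^2)|u|^2\,dx+\frac{t^2}{2}\int_{\R^3}|\partial_{x_3}u|^2\,dx-t^{\frac{p}{2}-1}\,\frac{\mu}{p}\int_{\R^3}|u|^p\,dx,
\end{equation*}
and since $\frac{p}{2}-1>0$ the negative term \emph{vanishes} as $t\to0$, while the first term is bounded below by $\frac{a\Lambda_0}{2}>0$ with $\Lambda_0$ as in \eqref{La} (apply \eqref{la} slice-by-slice in $x_3$ and use Lemma \ref{spectral}). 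So $I_{\mu,p}(u_t)$ tends to a strictly positive quantity and this scaling cannot produce a negative energy; the paper in fact stresses that the sign of the infimum is unknown in the presence of the partial confinement. What the scaling does give --- and this is exactly the paper's Lemma \ref{nonvanish}, taking $u=w\otimes\varphi$ with $w$ the normalized minimizer of \eqref{la} so that the $x'$-part equals $\frac{a\Lambda_0}{2}$ exactly --- is the weaker inequality $m_{\mu,p}(a)<\frac{a\Lambda_0}{2}$. Both of your uses of negativity have to be rerouted through this threshold: if $\|u_n\|_p\to0$ along a minimizing sequence, then $I_{\mu,p}(u_n)\geq\frac{a\Lambda_0}{2}+o_n(1)$, contradicting $m_{\mu,p}(a)<\frac{a\Lambda_0}{2}$; and the strict inequality $m_{\mu,p}(\theta a)<\theta\,m_{\mu,p}(a)$ for $\theta>1$ follows not from $m_{\mu,p}(a)<0$ but from the identity $I_{\mu,p}(\theta^{1/2}u_n)=\theta I_{\mu,p}(u_n)+\frac{\mu}{p}\bigl(\theta-\theta^{p/2}\bigr)\int_{\R^3}|u_n|^p\,dx$ combined with the lower bound $\liminf_n\int_{\R^3}|u_n|^p\,dx\geq\delta>0$ just established along minimizing sequences.

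A secondary point: you invoke the Lions vanishing lemma over balls $B_1(y)$ with $y\in\R^3$, which would in principle force a recentering by vectors with unbounded $x'$-component; the confinement term excludes this, but your write-up does not say how. The paper avoids the issue by running the Gagliardo--Nirenberg argument on the slabs $T_k=\R^2\times[k,k+1)$ (Lemma \ref{nonzero}), so the concentration translation is automatically along $x_3$ only, which is the precise sense of ``up to translation'' in the statement.
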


In order to prove the compactness of any minimizing sequence to \eqref{smin}, up to translation, in $H$, we shall apply the Lions concentration compactness principle \cite{Li1, Li2}. Thus it is sufficient to exclude the possibilities of vanishing and dichotomy. Here vanishing is ruled out by taking advantage of \cite[Lemma 2.1]{Be}, see Lemma \ref{spectral}. While dichotomy is avoided by establishing the following strict subadditivity inequality
$$
m_{\mu, p}(a)< m_{\mu, p}(a-a_0) + m_{\mu, p}(a_0) \ \ \text{for} \ \ 0<a_0<a.
$$

\begin{lem} \cite[Lemma 2.1]{Be} \label{spectral}
Define
\begin{align} \label{La}
\Lambda_0 := \inf_{\int_{\R^3}|u|^2 \, dx =1} \int_{\R^3}|\nabla u|^2 + (x_1^2 + x_2^2) |u|^2 \, dx,
\end{align}
and
\begin{align} \label{la}
\lambda_0:= \inf_{\int_{\R^2}|v|^2 \, dx' =1} \int_{\R^2}| \partial_{x_1} v|^2 +|\partial_{x_2} v|^2 + (x_1^2 + x_2^2) |v|^2 \, dx'.
\end{align}
Then $\Lambda_0 =\lambda_0$.
\end{lem}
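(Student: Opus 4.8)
Proof strategy for Lemma \ref{spectral} ($\Lambda_0 = \lambda_0$).

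The plan is to establish the two inequalities $\Lambda_0 \le \lambda_0$ and $\Lambda_0 \ge \lambda_0$ separately, exploiting the fact that the potential $x_1^2+x_2^2$ involves only the transverse variables $x'=(x_1,x_2)$, while the $x_3$-direction contributes only a free kinetic term. The inequality $\Lambda_0 \le \lambda_0$ is the easy direction: starting from a near-optimizer $v \in H^1(\R^2)$ for \eqref{la} with $\|v\|_{L^2(\R^2)} = 1$, I would build a trial function on $\R^3$ of the tensor form $u(x',x_3) := v(x') \, w(x_3)$, where $w \in H^1(\R)$ is normalized in $L^2(\R)$ and chosen with small kinetic energy $\int_\R |w'|^2\,dx_3$ — for instance a wide bump $w_\varepsilon(x_3) = \varepsilon^{1/2} w_1(\varepsilon x_3)$, whose Dirichlet energy scales like $\varepsilon^2 \to 0$. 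Then $\int_{\R^3}|\nabla u|^2 + (x_1^2+x_2^2)|u|^2 \,dx = \int_{\R^2}\big(|\partial_{x_1}v|^2 + |\partial_{x_2}v|^2 + (x_1^2+x_2^2)|v|^2\big)dx' + \int_\R |w_\varepsilon'|^2\,dx_3$, and letting $\varepsilon \to 0$ and then optimizing over $v$ gives $\Lambda_0 \le \lambda_0$.

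For the reverse inequality $\Lambda_0 \ge \lambda_0$, I would take any $u \in H$ with $\|u\|_{L^2(\R^3)}=1$ and, for each fixed $x_3 \in \R$, consider the slice $u(\cdot, x_3) \in H^1(\R^2)$. Writing $g(x_3) := \int_{\R^2} |u(x',x_3)|^2\,dx'$, the variational characterization \eqref{la} applied on each slice yields $\int_{\R^2}\big(|\partial_{x_1}u|^2 + |\partial_{x_2}u|^2 + (x_1^2+x_2^2)|u|^2\big)(x',x_3)\,dx' \ge \lambda_0 \, g(x_3)$ whenever $g(x_3)>0$ (and trivially when $g(x_3)=0$). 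Integrating in $x_3$ over $\R$ and using $\int_\R g(x_3)\,dx_3 = \|u\|_2^2 = 1$, I obtain
\begin{equation*}
\int_{\R^3}|\nabla u|^2 + (x_1^2+x_2^2)|u|^2\,dx \ge \int_{\R^3}|\partial_{x_1}u|^2 + |\partial_{x_2}u|^2 + (x_1^2+x_2^2)|u|^2\,dx \ge \lambda_0,
\end{equation*}
where the first inequality just drops the nonnegative term $\int_{\R^3}|\partial_{x_3}u|^2\,dx$. Taking the infimum over all such $u$ gives $\Lambda_0 \ge \lambda_0$, and combining the two bounds completes the proof.

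The only technical point requiring care — and the step I expect to be the main obstacle — is the slicing argument in the second direction: one must justify that for a.e. $x_3$ the slice $u(\cdot,x_3)$ genuinely lies in $H^1(\R^2)$ with the weighted norm, so that \eqref{la} is applicable slicewise, and that the integral $\int_\R$ of the slicewise inequality reproduces the full three-dimensional Dirichlet-plus-potential energy. This is handled by a standard Fubini argument: since $u \in H^1(\R^3)$ and $\int_{\R^3}(x_1^2+x_2^2)|u|^2\,dx < \infty$, the function $x_3 \mapsto \int_{\R^2}\big(|\partial_{x_1}u|^2+|\partial_{x_2}u|^2+(x_1^2+x_2^2)|u|^2+|u|^2\big)(x',x_3)\,dx'$ is finite for a.e. $x_3$, which places the a.e. slice in the relevant space; the measurability and the interchange of integration are immediate from Tonelli's theorem since all integrands are nonnegative. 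As this is precisely \cite[Lemma 2.1]{Be}, I would simply cite that reference for the details rather than reproducing them.
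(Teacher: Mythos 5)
The paper itself gives no proof of this lemma; it is quoted verbatim from \cite[Lemma 2.1]{Be}, and your two-sided argument (tensorizing a near-optimizer $v(x')$ with a spread-out bump $w_\varepsilon(x_3)$ to get $\Lambda_0\le\lambda_0$, then slicing in $x_3$ with Fubini--Tonelli to get $\Lambda_0\ge\lambda_0$ after discarding $\int|\partial_{x_3}u|^2$) is precisely the standard proof used in that reference. The proposal is correct, including the care taken over the a.e.\ membership of the slices $u(\cdot,x_3)$ in the weighted $H^1(\R^2)$ space.
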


With the help of the previous lemma, we are able to prove that any minimizing sequence to \eqref{smin} does not vanish. More precisely, we have the following result.

\begin{lem} \label{nonvanish}
Let $ \mu>0, 2 < p < \frac{10}{3}$. Assume $\{u_n\} \subset S(a)$ is a minimizing sequence to \eqref{smin}. Then there exists $\delta>0$ such that
$$
 \lim_{n \to \infty} \int_{\R^3}|u_n|^p \, dx \geq  \delta.
$$
\end{lem}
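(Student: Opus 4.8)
The plan is to argue by contradiction: suppose that along a subsequence $\|u_n\|_p^p \to 0$. The starting point is that $\{u_n\}$ is bounded in $H$, which follows from $(H_0)$ (indeed from \eqref{bd}, since $\frac{N(p-2)}{2}=\frac{3(p-2)}{2}<2$ when $p<\frac{10}{3}$, so $I_{\mu,p}$ is coercive on $S(a)$). If $\|u_n\|_p^p\to 0$, then from the definition of $I_{\mu,p}$ we get $m_{\mu,p}(a)=\lim_n I_{\mu,p}(u_n)=\lim_n \frac12\|u_n\|_{\dot H}^2\ge \frac{a}{2}\Lambda_0>0$, using \eqref{La} and $\int|u_n|^2=a$. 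So the first step is simply to record that vanishing of the $L^p$-norm would force $m_{\mu,p}(a)\ge \frac{a}{2}\Lambda_0$.

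The second step is to produce a competitor showing $m_{\mu,p}(a)<\frac{a}{2}\Lambda_0$, which then contradicts the above. This is where Lemma~\ref{spectral} enters: one takes $v\in H^1(\R^2)$, normalized in $L^2(\R^2)$, that nearly attains $\lambda_0=\Lambda_0$ (or exactly attains it — the ground state of the $2$D harmonic oscillator is explicit), and tensors it with a one-dimensional profile $w\in H^1(\R)$ in the $x_3$ variable: set $u(x',x_3):=\sqrt{a}\,v(x')w(x_3)$ with $\|w\|_{L^2(\R)}=1$. Then $u\in S(a)$ and a direct computation gives
$$
I_{\mu,p}(u)=\frac{a}{2}\Lambda_0 + \frac{a}{2}\|w'\|_{L^2(\R)}^2 - \frac{\mu}{p}a^{p/2}\|v\|_{L^p(\R^2)}^p\|w\|_{L^p(\R)}^p .
$$
Now one rescales only in the $x_3$ direction: replace $w(x_3)$ by $w_\sigma(x_3):=\sigma^{1/2}w(\sigma x_3)$, which preserves the $L^2(\R)$-norm. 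Under this rescaling $\|w_\sigma'\|_{L^2}^2=\sigma^2\|w'\|_{L^2}^2$ while $\|w_\sigma\|_{L^p}^p=\sigma^{p/2-1}\|w\|_{L^p}^p$, and since $2<p<\tfrac{10}{3}<6$ we have $\tfrac{p}{2}-1<1$, so as $\sigma\to 0$ the gradient term $\sim\sigma^2$ is negligible compared with the (negative) nonlinear term $\sim -\sigma^{p/2-1}$. Hence for $\sigma$ small enough $I_{\mu,p}(u_\sigma)<\frac{a}{2}\Lambda_0$, giving $m_{\mu,p}(a)<\frac{a}{2}\Lambda_0$ and the desired contradiction. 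One should note that the $(x_1^2+x_2^2)|u_\sigma|^2$ term is untouched by the $x_3$-rescaling, which is exactly why the partial-confinement structure cooperates here; this is the role played by \cite[Lemma 2.1]{Be}.

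Finally, one passes from the $\limsup$ contradiction to the statement as phrased: since every subsequence of $\{\|u_n\|_p^p\}$ would, if it tended to $0$, produce the contradiction above, there is $\delta>0$ with $\|u_n\|_p^p\ge\delta$ for all large $n$, hence $\liminf_n \|u_n\|_p^p\ge\delta$; replacing $\delta$ by something slightly smaller absorbs the finitely many small terms, yielding the claimed bound. (If one wants the $\lim$ rather than $\liminf$ one restricts to a subsequence along which the limit exists, which is harmless for the later concentration-compactness argument.) The only genuinely nontrivial ingredient is the construction of the competitor and the choice of the right one-parameter family to rescale — once one realizes that rescaling in $x_3$ alone leaves the confining potential inert, the estimate is the standard mass-subcritical scaling computation and presents no real obstacle; the main point to get right is the exponent bookkeeping $\tfrac{p}{2}-1<1\le$ the gradient exponent, which is guaranteed precisely by $p<\tfrac{10}{3}$.
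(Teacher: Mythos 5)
Your proposal is correct and follows essentially the same route as the paper: assume $\|u_n\|_p^p\to 0$ to deduce $m_{\mu,p}(a)\ge \frac{a\Lambda_0}{2}$, then contradict this with a tensor-product competitor $w(x')\varphi_\lambda(x_3)$ rescaled only in $x_3$ so that the partial confinement term is unaffected, using Lemma~\ref{spectral} and the exponent comparison $\frac p2-1<2$. The only cosmetic difference is that the paper obtains the exact minimizer $w$ of $\lambda_0$ via the compact embedding of the weighted space into $L^2(\R^2)$, whereas you also allow a near-minimizer or the explicit harmonic-oscillator ground state; either suffices.
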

\begin{proof}
Suppose by contradiction that $\int_{\R^3}|u_n|^p \, dx=o_n(1)$, thus we have that
\begin{align} \label{minimizing}
\begin{split}
m_{\mu, p}(a)= I_{\mu, p}(u_n) + o_n(1)&=\frac 12 \int_{\R^3}|\nabla u_n|^2  + (x_1^2 + x_2^2)|u_n|^2 \, dx + o_n(1)\\
&\geq \frac {a \Lambda_0}{2} + o_n(1),
\end{split}
\end{align}
where $\Lambda_0$ is the constant defined by \eqref{La}. Noting the definition \eqref{la}, since the Sobolev space $\mathcal{H}:=\{w \in H^1(\R^2): \int_{\R^2}\left(x_1^2 + x_2^2 \right)|w|^2 \, dx < \infty \}$ is compactly embedded in $L^2(\R^2)$, thus it is standard to show that $\lambda_0$ is achieved by some $w \in H^1(\R^2)$ with $ \int_{\R^2} |w|^2 \, dx'=1$. Let $\varphi \in H^1(\R)$ satisfy $\int_{\R} |\varphi|^2 \, dx_3 =a$, and any $\lambda >0$ set
$$
u_{\lambda}(x):=w(x')\varphi_{\lambda}(x_3) \in S(a), \ \text{where} \ \varphi_{\lambda}(x_3):= \lambda^{\frac 12} \varphi(\lambda x_3).
$$
Hence by Lemma \ref{spectral},
\begin{align} \label{add}
\begin{split}
I_{\mu, p}(u_{\lambda}) &= \frac{a \lambda_0}{2} + \frac 1 2 \int_{\R}|\partial_{x_3} \varphi_{\lambda}|^2 \, dx - \frac{\mu}{p} \int_{\R^2}|w|^p \, dx' \int_{\R} |\varphi_{\lambda}|^p \, dx_3 \\
&=\frac{a \Lambda_0}{2} + \frac {\lambda^2} {2} \int_{\R}|\partial_{x_3} \varphi|^2 \, dx_3 -  \lambda^{\frac p2-1}\frac{\mu}{p} \int_{\R^2}|w|^p \, dx' \int_{\R} |\varphi|^p \, dx_3.\\
\end{split}
\end{align}
Since $2 <p< \frac{10}{3}$, it follows from \eqref{add} that $m_{\mu, p}(a) \leq I_{\mu, p}(u) < \frac{a \Lambda_0}{2}$ for $\lambda>0$ small enough. This obviously contradicts \eqref{minimizing}, and ends the proof.
\end{proof}

\begin{lem} \label{nonzero}
Let $ \mu>0, 2 < p < \frac{10}{3}$. Assume $\{u_n\} \subset S(a)$ is a minimizing sequence to \eqref{smin}. Then there exist a sequence $\{k_n\} \subset \R$ and $u \in H \backslash\{0\}$ such that $u_n(x', x_3-k_n) \rightharpoonup u$ in $H$ as $n \to \infty$.
\end{lem}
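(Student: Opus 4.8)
The plan is to rule out vanishing of the minimizing sequence \emph{in the variable $x_3$} by combining the classical concentration--compactness vanishing lemma \cite[Lemma I.1]{Li2} with the partial confinement, which forbids any loss of mass in the transversal variable $x'$.

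First I would record that $\{u_n\}$ is bounded in $H$: by \eqref{bd} with $N=3$ we have $\|u_n\|_p^p\le C\|u_n\|_{\dot H}^{3(p-2)/2}$ with $\tfrac{3(p-2)}{2}<2$ since $2<p<\tfrac{10}{3}$, hence $\tfrac12\|u_n\|_{\dot H}^2= I_{\mu,p}(u_n)+\tfrac{\mu}{p}\|u_n\|_p^p\le m_{\mu,p}(a)+o_n(1)+C\|u_n\|_{\dot H}^{3(p-2)/2}$ forces $\|u_n\|_{\dot H}\le C$, while $\|u_n\|_2^2=a$. In particular the confinement yields the transversal tail estimate
\begin{align}\label{cyltail}
\int_{\{|x'|>R\}}|u_n|^2\,dx\ \le\ \frac{1}{R^2}\int_{\R^3}(x_1^2+x_2^2)|u_n|^2\,dx\ \le\ \frac{C}{R^2}\qquad\text{for every }R>0.
\end{align}

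Next I would establish the following non-vanishing statement: there exist $R_0>0$, $\eta>0$, a subsequence of $\{u_n\}$, and a sequence $\{k_n\}\subset\R$ such that
\begin{align}\label{goodslab}
\int_{\{|x'|<R_0\}\times(k_n-1,\,k_n+1)}|u_n|^2\,dx\ \ge\ \eta .
\end{align}
Suppose, for contradiction, that for every $R>0$ one has $\lim_{n\to\infty}\sup_{k\in\R}\int_{\{|x'|<R\}\times(k-1,k+1)}|u_n|^2\,dx=0$. For any $y=(y',y_3)\in\R^3$ and any $R>0$, the ball $B_1(y)$ is contained (up to a null set) in $\big(\{|x'|<R\}\times(y_3-1,y_3+1)\big)\cup\{|x'|>R\}$, so \eqref{cyltail} gives
\begin{align*}
\sup_{y\in\R^3}\int_{B_1(y)}|u_n|^2\,dx\ \le\ \sup_{k\in\R}\int_{\{|x'|<R\}\times(k-1,k+1)}|u_n|^2\,dx\ +\ \frac{C}{R^2},
\end{align*}
whence $\limsup_{n\to\infty}\sup_{y\in\R^3}\int_{B_1(y)}|u_n|^2\,dx\le C/R^2$ for all $R$, i.e.\ it equals $0$. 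By \cite[Lemma I.1]{Li2} this forces $u_n\to 0$ in $L^p(\R^3)$, contradicting Lemma \ref{nonvanish}, and \eqref{goodslab} follows.

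Finally, translations in $x_3$ leave $\|\cdot\|_H$, the constraint $\|\cdot\|_2^2=a$, and the value $I_{\mu,p}$ invariant; hence, writing $\tilde u_n(x):=u_n(x',x_3-k_n)$ for an appropriate choice of $k_n$, the sequence $\{\tilde u_n\}$ lies in $S(a)$, is again a minimizing sequence for \eqref{smin}, is bounded in $H$, and satisfies $\int_Q|\tilde u_n|^2\,dx\ge\eta$ with $Q:=\{|x'|<R_0\}\times(-1,1)$. Passing to a further subsequence, $\tilde u_n\rightharpoonup u$ in $H$; since $\{\tilde u_n\}$ is then bounded in $H^1(Q)$ and $Q$ is a bounded Lipschitz domain, the Rellich--Kondrachov theorem gives $\tilde u_n\to u$ in $L^2(Q)$, so $\int_Q|u|^2\,dx=\lim_n\int_Q|\tilde u_n|^2\,dx\ge\eta>0$ and therefore $u\in H\setminus\{0\}$. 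I expect the only real difficulty to be the non-vanishing step \eqref{goodslab}: the confinement estimate \eqref{cyltail} is exactly what is needed to convert the absence of a good transversal slab into genuine $L^2$-vanishing over $\R^3$, so that \cite[Lemma I.1]{Li2} applies; the remaining steps are routine, in the spirit of the arguments in \cite{Be,BeVi}.
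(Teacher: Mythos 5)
Your argument is correct, but it takes a genuinely different route from the paper's. The paper's proof first upgrades Lemma \ref{nonvanish} by interpolation to a lower bound on $\|u_n\|_{10/3}$, then applies a Gagliardo--Nirenberg inequality on each slab $T_k=\R^2\times[k,k+1)$ and sums over $k$ to locate a slab $T_{k_n}$ carrying a fixed amount of $L^2$-mass; after translating in $x_3$ it extracts the nontrivial weak limit from a compact embedding into $L^2(T_0)$ on the (unbounded) slab $T_0$ --- a step that implicitly relies on the confinement weight $x_1^2+x_2^2$, since the embedding $H^1(T_0)\hookrightarrow L^2(T_0)$ is not compact for plain $H^1$. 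You instead bring the confinement in explicitly and early, through the transversal tail bound \eqref{cyltail}, which converts the absence of a good bounded cylinder $\{|x'|<R_0\}\times(k_n-1,k_n+1)$ into genuine ball-vanishing over $\R^3$; the classical alternative of \cite[Lemma I.1]{Li2} then contradicts Lemma \ref{nonvanish}, and the nontrivial weak limit follows from the ordinary Rellich--Kondrachov theorem on a bounded Lipschitz cylinder. Your version buys a cleaner compactness step (only a bounded domain is ever involved, so no delicate weighted embedding is needed), at the cost of invoking the Lions vanishing lemma and the tail estimate; the paper's version avoids the vanishing lemma but needs the slab-wise Gagliardo--Nirenberg summation and the weighted compact embedding on the slab. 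Both arguments hinge on Lemma \ref{nonvanish}, both pass to subsequences when extracting the weak limit, and both deliver exactly the statement of the lemma, so your proposal is a valid (and in one respect more robust) alternative.
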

\begin{proof}
From \eqref{bd} we know that $\{u_n\}$ is bounded in $H$. Then by Lemma \ref{nonvanish} and the interpolation inequality in Lebesgue space, we find that there is a constant $\delta_1 >0$ such that
\begin{align} \label{low}
\lim_{n \to \infty}\int_{\R^3}|u_n|^{\frac {10}{3}} \, dx \geq \delta_1.
\end{align}
We now apply the Gagliardo-Nirenberg inequality on $T_k$, here $T_k:=\R^2 \times [k, k+1)$ for any $k \in \N$, then
\begin{align*}
\int_{T_k}|u_n|^{\frac{10}{3}} \, dx \leq C \left( \int_{T_k}|u_n|^2 \, dx \right)^{\frac 2 3} \|u_n\|_{\dot{H}(T_k)}^2.
\end{align*}
Notice that $\R^3=\cup_{-\infty}^{\infty}\R^2 \times [k, k+1)$, then by summing above inequality with respect to $k \in \N$ there holds
\begin{align}\label{sumk}
\delta_1 \leq \int_{\R^3}|u_n|^{\frac{10}{3}} \, dx \leq C \left( \sup_{k \in \Z} \int_{T_k}|u_n|^2 \, dx \right)^{\frac 2 3} \|u_n\|_{\dot{H}}^2.
\end{align}
Recall that $\{u_n\}$ is bounded in $H$, then \eqref{sumk} yields that for any $n \in \N$ there admits $k_n \in \N$ such that
$$
\int_{T_{k_n}}|u_n|^2\, dx \geq \delta_2
$$
holds for some $\delta_2 >0$. We now set $w_n(x):=u_n(x', x_3-k_n)$, hence $\int_{T_0} |w_n|^2 \, dx \geq \delta_2.$ By using the fact that the embedding $H^1(T_0)\hookrightarrow L^2(T_0)$ is compact, there exists $u \in H \backslash\{0\}$ such that $w_n \rightharpoonup u$ in $H$ as $n \to \infty$. Thus the proof is completed.
\end{proof}

\begin{proof} [Proof of Theorem \ref{scalar}]
Let $\{u_n\} \subset S(a)$ be an arbitrary minimizing sequence to \eqref{smin}. In light of Lemma \ref{nonzero}, we know that there is a sequence $\{k_n\} \subset \R$ so that $w_n(x):=u_n(x', x_3-k_n)$ admits a nontrivial weak limit $u$ in $H$. Observe that $\{w_n\} \subset S(a)$ is also a minimizing sequence to \eqref{smin}. To see the compactness of $\{w_n\}$, let us first deduce that $u \in S(a)$. To this end, we argue by contradiction that $0 < a_0:=\|u\|_{2}^2 < a$. By the Brezis-Lieb Lemma \cite{BrLi},
\begin{align*}
\begin{split}
\|w_n\|_{\dot H}^2 &=\|w_n-u\|_{\dot H}^2+ \|u\|_{\dot H}^2+o_n(1),\\
\|w_n\|_{q}^q & = \|w_n- u\|_{q}^q + \|u\|_{q}^q + o_n(1) \ \ \text{for} \ \ 1  \leq  q < \infty.
\end{split}
\end{align*}
Hence
\begin{align*}
I_{\mu, p}(w_n)= I_{\mu, p}(w_n-u) + I_{\mu, p}(u) + o_n(1),
\end{align*}
which gives that
\begin{align} \label{subin}
m_{\mu, p}(a) \geq m_{\mu, p}(a-a_0) + m_{\mu, p}(a_0),
\end{align}
where we used the continuity of the function $a \mapsto m_{\mu, p}(a)$ with respect to any $a \geq 0$.

Next we claim that
\begin{align} \label{subadd}
m_{\mu, p}(\theta a) < \theta m_{\mu, p}(a) \ \ \mbox{for any } \ \theta >1.
\end{align}
Indeed, we define $v_n:= \theta^{\frac 12} w_n \in S(\theta a)$, then
\begin{align*}
m_{\mu, p}(\theta a) \leq I_{\mu, p}(v_n) & = \frac {\theta}{2} \int_{\R^2}|\nabla w_n|^2  + (x_1^2 + x_2^2)|w_n|^2 \, dx - \theta^{\frac p2} \frac{\mu}{p} \int_{\R^3} |w_n|^p \, dx \\
& = \theta I_{\mu, p}(u_n) + \frac{\mu}{p}\left(\theta- \theta^{\frac p 2}\right) \int_{\R^3} |u_n|^p \, dx.
\end{align*}
Due to $\theta >1, p>2$, it then follows from Lemma \ref{nonvanish} that \eqref{subadd} necessarily holds.
Now by using \eqref{subadd}, we get that
$$
m_{\mu, p}(a)= \frac{a-a_0}{a}m_{\mu, p}(a) + \frac {a_0}{a}m_{\mu, p}(a) < m_{\mu, p}(a-a_0) + m_{\mu, p}(a_0),
$$
which contradicts \eqref{subin}. This in turn gives that $u \in S(a)$. Thus from the interpolation inequality in Lebesgue space we know that $w_n \to u$ in $L^q(\R^N)$ for $2 \leq q <6$ as $n \to \infty$. Applying the weak lower semicontinuity of the norm, we then find that $I_{\mu, p}(u) \leq m_{\mu, p}(a)$. Since $m_{\mu, p}(a)\leq I_{\mu, p}(u)$, it follows that $ I_{\mu, p}(u)=m_{\mu, p}(a)=I_{\mu, p}(w_n)+o_n(1),$ therefore we have that $w_n \to u $ in $H$ as $n \to \infty$. Thus we finish the proof.
\end{proof}

\section{Proof of Theorem \ref{existence}} \label{proof}

In this section we prove Theorem \ref{existence}. To do this, we first collect some basic properties with respect to the function $(a_1, a_2) \mapsto m(a_1, a_2)$.

\begin{lem} \label{prop}
Assume $a_1, a_2 \geq 0$. Then
\begin{enumerate}
\item [(1)] $(a_1, a_2) \mapsto m(a_1, a_2)$ is continuous;
\item [(2)] $ m(a_1, a_2) \leq m(b_1, b_2) + m(c_1, c_2)$, where $a_i=b_i + c_i$ with $b_i, c_i \geq 0$ for $i=1, 2$;
\item [(3)]$m(a_1, a_2) < \frac{\Lambda_0 a_1}{2} + m_{\mu_2, p_2}(a_2)$ and $m(a_1, a_2) < m_{\mu_1, p_1}(a_1) + \frac{\Lambda_0 a_2}{2}$,
where $m_{\mu, p}(a)$ and $\Lambda_0$ are defined by \eqref{smin} and \eqref{La}, respectively.
\end{enumerate}
\end{lem}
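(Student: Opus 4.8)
The plan is to dispatch (1) and (2) by routine constrained‑variational arguments and to spend the real effort on the strict inequality (3). Throughout I will use the fact that, under $(H_0)$, the bounds \eqref{bd} and \eqref{mix} (for which the relevant gradient exponents are $<2$, since $p_i<\tfrac{10}{3}$ and $r_1+r_2<\tfrac{10}{3}$) together with Young's inequality make $J$ coercive on every product constraint $S(b_1,b_2)$, with coercivity constants depending continuously on $(b_1,b_2)$. In particular, almost minimizers have $H\times H$‑norms bounded uniformly for $(b_1,b_2)$ ranging in a compact subset of $[0,\infty)^2$.

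For (2): fix $\eps>0$ and pick $(u_1,u_2)\in S(b_1,b_2)$, $(v_1,v_2)\in S(c_1,c_2)$ with $J(u_1,u_2)\le m(b_1,b_2)+\eps$ and $J(v_1,v_2)\le m(c_1,c_2)+\eps$. By density of compactly supported functions in $H$ together with the continuity of $J$ on $H\times H$, and a rescaling to restore the masses, I may assume each $u_i,v_i$ has compact support. The structural point is that the partial potential $x_1^2+x_2^2$ is invariant under translations in the $x_3$ direction: writing $\tau_R f(x):=f(x',x_3-R)$ one has $\tau_R v_i\in S(c_i)$ and $\|\tau_R v_i\|_{\dot H}=\|v_i\|_{\dot H}$. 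For $R$ large the supports of $u_1,u_2$ and of $\tau_R v_1,\tau_R v_2$ are disjoint, hence $(u_1+\tau_R v_1,\,u_2+\tau_R v_2)\in S(a_1,a_2)$ and every quadratic, $p_i$‑, and mixed $r$‑integral splits additively, giving $J(u_1+\tau_R v_1,u_2+\tau_R v_2)=J(u_1,u_2)+J(v_1,v_2)\le m(b_1,b_2)+m(c_1,c_2)+2\eps$; letting $\eps\to0$ proves (2). The cases with some $b_i$ or $c_i$ equal to $0$ are included, using $m(0,b)=m_{\mu_2,p_2}(b)$ and $m(b,0)=m_{\mu_1,p_1}(b)$ (the cross term vanishes since $r_1,r_2>1$). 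For (1): on $(0,\infty)^2$ the map $(u_1,u_2)\mapsto(\sqrt{b_1/a_1}\,u_1,\sqrt{b_2/a_2}\,u_2)$ carries $S(a_1,a_2)$ onto $S(b_1,b_2)$, and since almost minimizers stay bounded in $H\times H$ uniformly near $(a_1,a_2)$, one obtains $|m(b_1,b_2)-m(a_1,a_2)|\to0$ as $(b_1,b_2)\to(a_1,a_2)$; continuity up to the boundary follows by combining the upper bound $m(b_1,b_2)\le m_{\mu_1,p_1}(b_1)+m_{\mu_2,p_2}(b_2)$ from (2), the continuity of $a\mapsto m_{\mu,p}(a)$ with $m_{\mu,p}(0)=0$, and the matching lower bound from $J\ge I_{\mu_1,p_1}(u_1)+I_{\mu_2,p_2}(u_2)-\beta\int_{\R^3}|u_1|^{r_1}|u_2|^{r_2}\,dx$ after absorbing the last term via \eqref{mix} and Young's inequality.

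For (3) I prove only the first inequality (the second is symmetric) and I take $a_1>0$ (the claim is an equality when $a_1=0$). By Theorem \ref{scalar} fix a minimizer $u_2\in S(a_2)$ of $m_{\mu_2,p_2}(a_2)$. Let $w\in H^1(\R^2)$ with $\|w\|_2=1$ realize $\lambda_0$ in \eqref{la}, so that $\int_{\R^2}|\nabla w|^2+(x_1^2+x_2^2)|w|^2\,dx'=\lambda_0=\Lambda_0$ by Lemma \ref{spectral}, and fix $\varphi\in C_c^\infty(\R)$ with $\|\varphi\|_2=1$. For $\lambda>0$ set $\varphi_\lambda(x_3):=\lambda^{1/2}\varphi(\lambda x_3)$ and $u_1^\lambda(x):=\sqrt{a_1}\,w(x')\varphi_\lambda(x_3)\in S(a_1)$, so $(u_1^\lambda,u_2)\in S(a_1,a_2)$. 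A direct computation (using $\|\varphi_\lambda\|_2=1$, $\|\varphi_\lambda'\|_2^2=\lambda^2\|\varphi'\|_2^2$, $\|\varphi_\lambda\|_{p_1}^{p_1}=\lambda^{p_1/2-1}\|\varphi\|_{p_1}^{p_1}$) gives
\[
J(u_1^\lambda,u_2)=\frac{a_1\Lambda_0}{2}+m_{\mu_2,p_2}(a_2)+\frac{a_1\lambda^2}{2}\|\varphi'\|_2^2-C_1\,\lambda^{\frac{p_1}{2}-1}-\beta\int_{\R^3}|u_1^\lambda|^{r_1}|u_2|^{r_2}\,dx,
\]
with $C_1:=\frac{\mu_1}{p_1}a_1^{p_1/2}\|w\|_{p_1}^{p_1}\|\varphi\|_{p_1}^{p_1}>0$. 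The coupling integral is nonnegative and is subtracted (since $\beta>0$), and because $2<p_1<\tfrac{10}{3}$ we have $0<\tfrac{p_1}{2}-1<1<2$, so for $\lambda>0$ small the two explicit $\lambda$‑terms sum to a strictly negative quantity; hence $m(a_1,a_2)\le J(u_1^\lambda,u_2)<\frac{a_1\Lambda_0}{2}+m_{\mu_2,p_2}(a_2)$, which is (3).

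The step I expect to demand the most care is (3): the test function must have its first component realize the bottom $\Lambda_0$ of the essential spectrum (hence the product structure $w(x')\varphi_\lambda(x_3)$ with $\varphi_\lambda$ spreading out, relying on Lemma \ref{spectral}), while retaining a small but \emph{nonzero} mass‑subcritical self‑interaction to strictly beat $\tfrac{\Lambda_0 a_1}{2}$; the coupling term is harmless only because $\beta>0$. Everything in (1)–(2) is standard, the one mildly delicate point being the boundary continuity in (1).
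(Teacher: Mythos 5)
Your proposal is correct and follows essentially the same route as the paper: rescaling of almost minimizers for (1), gluing $x_3$-translated, disjointly supported almost minimizers for (2), and for (3) the product test function $w(x')\varphi_\lambda(x_3)$ spread out in $x_3$ paired with a minimizer of $m_{\mu_2,p_2}(a_2)$, where the mass-subcritical term $\lambda^{p_1/2-1}$ beats $\lambda^2$ for small $\lambda$ and the coupling term only helps since $\beta>0$. The only differences are your (welcome) extra care about reducing to compactly supported functions in (2) and about continuity of $m$ up to the boundary of $[0,\infty)^2$, points the paper glosses over.
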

\begin{proof}
$(1)$ For any $(a_1, a_2) \in {\R}^+ \times  {\R}^+$, suppose $\{(a_1^n, a_2^n)\} \subset {\R}^+ \times {\R}^+$ is a sequence satisfying $(a_1^n, a_2^n) \to (a_1, a_2)$ as $n \to \infty$, where ${\R}^+:=(0, \infty)$. By the definition \eqref{min}, for any $\eps >0$ there exists a sequence $\{(u_1^n, u_2^n)\} \subset S(a_1^n) \times S(a_2^n)$ such that
\begin{align} \label{conti}
J(u_1^n, u_2^n) \leq m(a_1^n, a_2^n) + \frac{\eps}{2}.
\end{align}
We now set
$$
v_i^n:=\left(\frac{a_i}{a_i^n}\right)^{\frac 12}u_i^n \in S(a_i) \ \ \text{for}  \ i=1, 2.
$$
Since $(a_1^n, a_2^n) \to (a_1, a_2)$ as $n \to \infty,$ it then follows from \eqref{conti} that for $n \in \N $ sufficiently large
\begin{align*}
m(a_1, a_2) \leq J(v_1^n, v_2^n) \leq m(a_1^n, a_2^n) + \eps.
\end{align*}
On the other hand, we similarly get that for $n \in \N $ sufficiently large
\begin{align*}
m(a_1^n, a_2^n)\leq m(a_1, a_2) + \eps.
\end{align*}
Hence $m(a_1^n, a_2^n) \to m(a_1, a_2)$ as $n \to \infty$. This then shows that the function $(a_1, a_2) \mapsto m(a_1, a_2)$ is continuous.

$(2)$ Recalling the definition \eqref{min}, for any $\eps >0$ there are $(u_1, u_2) \in S(b_1) \times S(b_2)$ and $(v_1, v_2) \in S(c_1) \times S(c_2)$ such that
$$
J(u_1, u_2) \leq m(b_1, b_2) + \frac \eps 2 \ \ \text{and} \ \ J(v_1, v_2) \leq m(c_1, c_2) + \frac \eps 2.
$$
Without loss of generality, we assume that $supp \, u_i \cap supp \, v_i = \emptyset$ for $i=1, 2$. Thus defining $ w_i:=u_i + v_i \in S(a_i)$, we have that
$$
m(a_1, a_2) \leq J(w_1, w_2) \leq J(u_1, u_2)+ J(v_1, v_2) \leq m(b_1, b_2) + m(c_1, c_2) + \eps,
$$
from which we deduces that $m(a_1, a_2) \leq m(b_1, b_2) + m(c_1, c_2).$

$(3)$ Observing the definition \eqref{la}, we know that $\lambda_0$ is achieved by some $w \in H^1(\R^2)$ satisfying $\int_{\R^2}|w|^2 \, dx' =1$. Let $\varphi \in H^1(\R)$ be such that $\int_{\R}|\varphi|^2 \, dx_3=a_1$. We then write $u_1(x):=w(x')\varphi_{\lambda}(x_3) \in S(a_1)$, where $\varphi_{\lambda}(x_3):= \lambda^{\frac 12} \varphi(\lambda x_3)$ for $\lambda >0$. On the other hand, from Theorem \ref{scalar} we know that there exists a $u_2 \in S(a_2)$ such that $I_{\mu_2, p_2} (u_2)=m_{\mu_2, p_2}(a_2)$. Notice that
\begin{align*}
J(u_1, u_2) \leq \frac{\Lambda_0 a_1} {2} + m_{\mu_2, p_2}(a_2) + \frac {\lambda^2} 2 \int_{\R}|\partial_{x_3} \varphi|^2 \, dx
- \lambda^{\frac {p_1} {2}-1}\frac{\mu_1}{p_1} \int_{\R^2}|w|^{p_1} \, dx' \int_{\R} |\varphi|^{p_1} \, dx_3.
\end{align*}
Because $2 < p_1 < \frac{10}{3}$, then $m(a_1, a_2) \leq J(u_1, u_2) < \frac{\Lambda_0 a_1} {2} + m_{\mu_2, p_2}(a_2)$ for $\lambda>0$ sufficiently small. Furthermore, by the same arguments we can obtain that $m(a_1, a_2) < m_{\mu_1, p_1}(a_1) + \frac{\Lambda_0 a_2}{2}$. Thus we finish the proof.
\end{proof}

Now we are in a position to prove Theorem \ref{existence}.

\begin{proof}[Proof of Theorem \ref{existence}] Let $\{(u_1^n, u_2^n)\} \subset S(a_1, a_2)$ be an arbitrary minimizing sequence to \eqref{min}. Firstly, in view of the assumption $(H_0)$, from \eqref{bd}-\eqref{mix} we then know that $\{(u_1^n, u_2^n)\}$ is bounded in $H \times H$. We shall proceed in three steps to prove the compactness of $\{(u_1^n, u_2^n)\}$, up to translation, in $H \times H$. \medskip \\
{\bf Step 1:} We claim that there is a $\delta >0$ such that
\begin{align} \label{lp}
\lim_{n \to \infty}\int_{\R^3} |u_1^n|^{p_1} \, dx \geq \delta \ \ \text{and} \ \ \lim_{n \to \infty}\int_{\R^3} |u_2^n|^{p_2} \, dx  \geq \delta.
\end{align}

Arguing by contradiction, without restriction we assume that $\int_{\R^3} |u_1^n|^{p_1} \, dx =o_n(1)$. Since $\{(u_1^n, u_2^n)\}$ is bounded in $H \times H$, then by using the interpolation inequality in Lebesgue space, it comes from \eqref{holder} that $\int_{\R^3}|u_1^n|^{r_1}|u_2^n|^{r_2}\, dx =o_n(1)$. Thus
\begin{align*}
 J(u_1^n, u_2^n) \geq \frac{\Lambda_0 a_1}{2} + m_{\mu_2, p_2}(a_2)+ o_n(1),
\end{align*}
which means that
\begin{align*}
m(a_1, a_2) \geq \frac{\Lambda_0 a_1}{2} + m_{\mu_2, p_2}(a_2).
\end{align*}
This is a contradiction to the assertion $(3)$ in Lemma \ref{prop}. Therefore \eqref{lp} holds.

Next by applying the same arguments as presented in the proof Lemma \ref{nonzero}, it follows from \eqref{lp} that there are a sequence $\{y_1^n\} \subset \R$ and $u_1 \in H \backslash \{0\}$ such that $u_1^n(x', x_3-y_1^n) \rightharpoonup u_1$ in $H$ as $n \to \infty$. Note that $\{u_2^n\}$ is bounded in $H$, then there is $u_2 \in H$ such that $u_2^n(x', x_3-y_1^n) \rightharpoonup u_2$ in $H$ as $n \to \infty$. We now define $ v_1^n(x):=u_1^n(x', x_3-y_1^n)-u_1(x), v_2^n(x):=u_2^n(x', x_3-y_1^n)-u_2(x),$ $0<b_1:=\|u_1\|_{2}^2 \leq a_1$ and $0 \leq b_2:=\|u_2\|_{2}^2 \leq a_2$. \medskip \\
{\bf Step 2:} We claim that $b_1=a_1$, namely $u_1 \in S(a_1)$.

We suppose by contradiction that $b_1 < a_1$. From the Brezis-Lieb Lemma, then
\begin{align*}
\begin{split}
\|(u_1^n, u_2^n)\|_{\dot H \times \dot H}^2 &=\|(v_1^n, v_2^n)\|_{\dot H \times \dot H}^2+ \|(u_1, u_2)\|_{\dot H \times \dot H}^2+o_n(1),\\
\|(u_1^n, u_2^n)\|_{q}^q & = \|(v_1^n, v_2^n)\|_{q}^q + \|(u_1, u_2)\|_{q}^q + o_n(1) \ \ \text{for} \ \ 1  \leq  q < \infty.
\end{split}
\end{align*}
This joints with Lemma \ref{Brezis} and the assertion $(1)$ in Lemma \ref{prop} assert that
\begin{align} \label{global}
\begin{split}
J(u_1^n, u_2^n)&=J(u_1, u_2) + J(v_1^n, v_2^n) + o_n(1) \\
&  \geq J(u_1, u_2) + m(a_1-b_1, a_2-b_2) + o_n(1).
\end{split}
\end{align}
Thus
$$
m(a_1, a_2) \geq m(b_1, b_2) + m(a_1-b_1, a_2-b_2).
$$
It then results from the assertion $(2)$ in Lemma \ref{prop} that
\begin{align} \label{subadd1}
m(a_1, a_2)=m(b_1, b_2) + m(a_1-b_1, a_2-b_2).
\end{align}
Since $J(u_1^n, u_2^n)=m(a_1, a_2)+o_n(1)$, by using \eqref{global}-\eqref{subadd1}, there holds
\begin{align} \label{minineq1}
J(u_1, u_2)= m(b_1, b_2).
\end{align}

In order to obtain a contradiction we now consider two cases: $b_2 < a_2$ and $b_2 = a_2$. We first deal with the case $b_2 < a_2$. In this case we assume $\{(w_1^n, w_2^n)\} \subset S(a_1-b_1, a_2-b_2)$ is a minimizing sequence to $m(a_1-b_1, a_2-b_2)$. Reasoning as before, we conclude that there exist a sequence $\{z_1^n\} \subset \R$, $w_1 \in H \backslash \{0\}$ and $ w_2 \in H$ such that $ w_1^n(x', x_3-z_n^1) \rightharpoonup w_1$ and $ w_2^n(x', x_3-z_n^1) \rightharpoonup w_2 \ \text{in} \ H \ \text{as}\ n \to \infty.$ Furthermore, defining $c_1 :=\|w_1\|_{2}^2 >0$ and $c_2:=\|w_2\|_{2}^2 \geq 0$, we have that
\begin{align} \label{subadd2}
m(a_1-b_1, a_2-b_2)=m(c_1, c_2) + m(a_1-b_1-c_1, a_2-b_2-c_2),
\end{align}
as well as
\begin{align} \label{minineq2}
J(w_1, w_2)=m(c_1, c_2).
\end{align}
Now combining \eqref{subadd1} with \eqref{subadd2}, we see that
\begin{align} \label{aim}
m(a_1, a_2)=m(b_1, b_2) + m(c_1, c_2) + m(a_1-b_1-c_1, a_2-b_2-c_2).
\end{align}
Noticing \eqref{measure1}, we get that
\begin{align}\label{norm}
\int_{\R^3}(x_1^2 + x_2^2)|u^*|^2 \, dx = \int_{\R^2} (x_1^2 + x_2^2) \int_{\R} |u^*|^2 \, dx_3 dx'= \int_{\R^3}(x_1^2 + x_2^2)|u|^2 \, dx,
\end{align}
then as a result of the assertions $(2)$-$(4)$ in Lemma \ref{steiner}, we deduce from \eqref{minineq1} and \eqref{minineq2} that
\begin{align} \label{equal}
J(u_1^*, u_2^*)=m(b_1, b_2) \ \ \text{and} \ \  J(w_1^*, w_2^*)=m(c_1, c_2).
\end{align}
This implies that $(u_1^*, u_2^*)$ and $(w_1^*, w_2^*)$ are solutions to \eqref{system} with some $(\lambda_1, \lambda_2) \in \R^2$. Hence by the elliptic regularity theory we have that $u_i^*, w_i^* \in C^2(\R^3)$. Moreover, if $b_2, c_2 >0$, the maximum principle reveals that $u_i^*, w^*_i >0$ for $i=1 ,2$. We now apply \eqref{measure2}, thus
\begin{align*}
\int_{\R^3} (x_1^2 + x_2^2)|u_i^* \ast w_i^*|^2 \, dx &= \int_{\R^2} (x_1^2 + x_2^2) \int_{\R} |u_i^* \ast w_i^*|^2 \, dx_3 dx'\\
&=\int_{\R^3} (x_1^2 + x_2^2)(|u_i^*|^2 + |w_i^*|^2) \, dx
\end{align*}
for $i=1, 2$. By using the assertions $(2)$-$(4)$ in Lemma \ref{coupled} and \eqref{equal}, we then find that
\begin{align*}
m(b_1+ c_1, b_2 + c_2) &\leq J(u_1^* \ast w_1^*, u_2^* \ast w_2^*) < J(u_1^*, u_2^*) + J(w_1^*, w_2^*) \\
&= m(b_1, b_2) + m(c_1, c_2).
\end{align*}
If $b_2=0$ or $c_2 =0$, since we have that $u_1^*, w_1^* >0$, thus the above strict inequality remain valid. Hence by the assertion $(2)$ in Lemma \ref{prop}, we reach a contradiction from \eqref{aim}. Next we consider the case that $b_2=a_2$. In this case we can also obtain a contradiction through the same arguments as handling the case $b_2< a_2$. As a consequence, we have proved that $b_1=a_1$. Therefore
\begin{align}\label{l21}
u_1^n(x', x_3-y_n^1) \to {u}_1(x) \ \text{in} \ L^2(\R^3) \ \text{as}\ n \to \infty.
\end{align}
We now reverse the roles of $\{u_1^n\}$ and $\{u_2^n\}$ in Step 1- Step 2, then there exist a sequence $\{y_2^n\} \subset \R$, $\tilde{u}_1 \in H$ and $ \tilde{u}_2 \in S(a_2)$ such that
$u_1^n(x', x_3-y_n^2) \rightharpoonup \tilde{u}_1(x)$ and $ u_2^n(x', x_3-y_n^2) \rightharpoonup \tilde{u}_2(x) \ \text{in} \ H \ \text{as}\ n \to \infty.$
Moreover,
\begin{align}\label{l22}
u_2^n(x', x_3-y_n^2) \to \tilde{u}_2(x) \ \text{in} \ L^2(\R^3) \ \text{as}\ n \to \infty.
\end{align}
{\bf Step 3:} We claim that
\begin{align} \label{lim}
\lim_{n \to \infty}|y_1^n-y_2^n| < \infty.
\end{align}

If this were false, we would get that $\tilde{u}_1=u_2=0$ due to $(u_1, \tilde{u}_2) \in S(a_1, a_2)$. Then from the Brezis-Lieb Lemma and Lemma \ref{Brezis},
\begin{align*}
&J(u_1^n ,u_2^n)=J(u_1^n(x', x_3-y_1^n), u_2^n(x', x_3-y_1^n)) \\
&=J(u_1, 0)+ J(u_1^n(x', x_3-y_1^n)-u_1, u_2^n(x', x_3-y_1^n))+ o_n(1) \\
&=J(u_1, 0)+ J(u_1^n(x', x_3-y_2^n)-u_1(x', x_3+y_1^n-y_2^n), u_2^n(x', x_3-y_2^n))+ o_n(1) \\
&=J(u_1, 0)+ J(0, \tilde{u}_2) + J(u_1^n(x', x_3-y_2^n)-u_1(x', x_3+y_1^n-y_2^n), u_2^n(x', x_3-y_2^n)-\tilde{u}_2) + o_n(1).
\end{align*}
Hence it infers from \eqref{l21}-\eqref{l22} that
\begin{align*}
m(a_1, a_2)  \geq J(u_1, 0) + J(0, \tilde{u}_2).
\end{align*}
According to \eqref{norm} and the assertions $(2)$-$(3)$ in Lemma \ref{steiner}, we then obtain that
$$
m(a_1, a_2) \geq  J(u_1^*, 0) + J(0, \tilde{u}_2^*) \geq m_{\mu_1, p_1}(a_1) + m_{\mu_1, p_1}(a_1).
$$
By using the assertion $(2)$ in Lemma \ref{prop}, we get that
\begin{align} \label{subadd3}
m(a_1, a_2) =  J(u_1^*, 0) + J(0, \tilde{u}_2^*).
\end{align}
Record that $(u_1^*, \tilde{u}_2^*) \in S(a_1, a_2)$, then
$$
m(a_1, a_2) \leq J(u_1^*,  \tilde{u}_2^*) < J(u_1^*, 0) + J(0, \tilde{u}_2^*),
$$
which contradicts \eqref{subadd3}. Therefore \eqref{lim} holds, which gives that there exists $y \in \R$ such that $y_1^n=y_2^n + y +o_n(1),$ up to translation.
We now define
$$
\varphi_1^n(x):=u_1^n(x', x_3-y_1^n), \quad \varphi_2^n(x):=u_2^n(x', x_3-y_1^n).
$$
Thus there is $(\varphi_1(x), \varphi_2(x)):=(u_1(x), \tilde{u}_2(x', x_3-y)) \in S(a_1, a_2)$ such that $(\varphi_1^n, \varphi_2^n) \rightharpoonup (\varphi_1, \varphi_2)$ in $H \times H$ as $n \to \infty.$
Furthermore, we have that $(\varphi_1^n, \varphi_2^n) \to (\varphi_1, \varphi_2)$ in $L^{p}(\R^3) \times L^{p}(\R^3)$ as $n \to \infty$ for $2 \leq p <6$. By consequence,
\begin{align*}
J(u_1^n, u_2^n)= J(\varphi_1^n, \varphi_2^n) &=J(\varphi_1, \varphi_2) + J(\varphi_1^n - \varphi_1, \varphi_2^n-\varphi_2) + o_n(1) \\
&\geq m(a_1, a_2)
 + \frac12 \|(\varphi_1^n-\varphi_1, \varphi_2^n-\varphi_2)\|^2_{\dot H \times \dot H}+o_n(1).
\end{align*}
Since $J(u_1^n, u_2^n)=m(a_1, a_2)+o_n(1)$, we then obtain that $(\varphi_1^n, \varphi_2^n) \to (\varphi_1, \varphi_2)$ in $H \times H$ as $n \to \infty$. Thus we end the proof.
\end{proof}

{\sc Address of the author:}\\[1em]
\begin{tabular}{ll}
Tianxiang Gou \\
Laboratoire de Math\'ematiques (UMR 6623), \\
Universit\'e de Bourgogne Franche-Comt\'e, \\
16, Route de Gray 25030 Besan\c{c}on Cedex, France.\\
School of Mathematics and Statistics, \\
Lanzhou University, Lanzhou, Gansu 730000, People's Republic of China.\\
goutx14@lzu.edu.cn
\end{tabular}

\end{document}